\documentclass[12pt]{article}

\usepackage{amssymb}%*
\usepackage{amsthm}
\usepackage{amsfonts}%*
\usepackage[latin2]{inputenc}%*
\usepackage{amsmath}%*
\usepackage{anysize}%*
\usepackage{hyperref}
%\usepackage{titlesec}
%\titleformat{\section}
%  {\normalfont\large\bfseries}
%  {\thesection}{1em}{}
%\titleformat{\subsection}
%  {\normalfont\bfseries}
%  {\thesubsection}{1em}{}
\marginsize{2.6cm}{2.6cm}{1cm}{2cm}

\newtheorem{main_theorem}{Theorem}
\newtheorem{main_definition}{Definition}

\newtheorem{theorem}{Theorem}[section]
\newtheorem{lemma}[theorem]{Lemma}
\newtheorem{corollary}[theorem]{Corollary}
\newtheorem{remark}[theorem]{Remark}

\newtheorem{definition}[theorem]{Definition}
\newtheorem{proposition}[theorem]{Proposition}

\def\o{\omega}

\begin{document}
\title{Weak geodesic rays in the space of K\"ahler potentials and the class $\mathcal E(X,\omega)$}
\author{Tam\'as Darvas\thanks{Research supported by NSF grant DMS1162070 and the Purdue Research Foundation \newline 
2010 Mathematics subject classification 53C55, 32W20, 32U05.}}
\date{\vspace{0.1in} \emph{\small{To Timea.}}\vspace{-0.2in}}
\maketitle
\begin{abstract}
Suppose $(X,\omega)$ is a compact K\"ahler manifold. In the present work we propose a construction for weak geodesic rays in the space of K\"ahler potentials that is tied together with properties of the class $\mathcal{E}(X,\omega)$. As an application of our construction, we prove a characterization of $\mathcal{E}(X,\omega)$ in terms of envelopes.
\end{abstract}
\section{Introduction and main results}

Given $(X^n,\omega)$, a connected compact K\"ahler manifold, the space of smooth K\"ahler potentials is the set
\[\mathcal{H} := \left\{ v \in C^\infty(X) : \omega + i \partial \overline{\partial} v > 0\right\}.\]
This space has a Fr\'{e}chet manifold structure as an open subset of $C^{\infty}(X)$. For $v \in \mathcal{H}$ one can identify $T_v \mathcal{H}$ with $C^{\infty}(X)$. As found by Mabuchi, one can define a Riemannian metric on $T_v \mathcal{H}$:
\[\langle \xi,  \eta \rangle_v := \int_X \xi \eta (\omega + i\partial \overline{\partial}v)^n, \ \ \ \ \xi,\eta \in T_v \mathcal{H}. \]

A smooth curve $(\alpha,\beta)\ni t \to \phi_t \in \mathcal{H}$ with $\alpha,\beta \in \Bbb R \cup \{ -\infty,+\infty\}$ is a geodesic in this space if:
$$\ddot{\phi_t} - \frac{1}{2} \langle \nabla\dot \phi_t, \nabla\dot \phi_t\rangle_{\phi_t}=0, \ t \in (\alpha,\beta).$$

As discovered independently by Semmes \cite{s} and Donaldson \cite{d1}, the above equation can be understood as a complex Monge-Amp\`ere equation. With the notation
$$S_{\alpha\beta} = \left\{s \in \Bbb C: \alpha <\textup{Re }s<\beta \right\}$$ let $\omega$ be the pullback of the K\"{a}hler form $\omega$ to the product ${S_{\alpha\beta}} \times X$. Let $u \in C^{\infty}({S_{\alpha\beta}} \times X)$ be the complexification of $\phi$, defined by $u(s,x) := \phi(\textup{Re}s, x)$. Then $\phi$ is a geodesic if and only if the following equation is satisfied for $u$:
\begin{equation}\label{geodesic_eq}
(\pi^*\omega + i \partial \overline{\partial}u)^{n+1}=0,
\end{equation}
where $\pi: S \times X \to X$ is the projection map to the second component. By analogy with the smooth setting, a curve $ (\alpha,\beta) \ni t \to u_t \in \textup{PSH}(X,\omega)$ is called a weak subgeodesic segment, if its complexification $u: S_{\alpha\beta} \times X \to R$  is a locally bounded $\pi^* \omega$-psh function. If additionally (\ref{geodesic_eq}) is satisfied in the Bedford-Taylor sense \cite{bt}, then $t \to u_t$ is called a weak geodesic segment.

In case $\alpha =0$ and $\beta = \infty$, we call $t\to u_t$ a weak geodesic ray. Given such a curve, we would like to understand the limit $u_\infty := \lim_{t \to \infty} u_t$ whenever it exists. This question is partially motivated by Donaldson's program on the existence and uniqueness of constant scalar curvature K\"ahler metrics in a fixed K\"ahler class. According to this program, one should study the limit behavior of geodesics rays (as well as certain functionals along the rays) as $t \to \infty$. However, for a general weak geodesic ray, $\lim_{t \to \infty} u_t$ does not exist and we need a normalization procedure that fixes this issue. This is the main motivation of our first result, which holds for arbitrary weak geodesic segments not just rays.

If the weak geodesic $t \to u_t$ is in $C^1$, Berndtsson \cite[Section 2.2]{br2} observed that the range of the tangent vectors $\dot u_t : X \to \Bbb R$ is the same for any $t \in (\alpha,\beta)$. If $u$ is a smooth strong geodesic, more can be said, as it is well known that $\dot u_t = \dot u_0\circ F_t$, for some family of symplectomorphisms $F_t:X \to X$  (\cite[Formula (27)]{rz}). Our first result is a generalization of these observations to arbitrary weak geodesic segments.

\begin{main_theorem}[Theorem 3.4] \label{m_norm_thm} Given a weak geodesic $(\alpha,\beta) \ni t  \to u_t \in \textup{PSH}(X,\omega),$ $ (\alpha,\beta \in \Bbb R \cup \{-\infty,+\infty \}),$ for any $a,b,c,d \in (\alpha,\beta)$ one has
\begin{enumerate}
\item[(i)] $\inf_{X}\frac{u_a-u_b}{a-b}=\inf_{ X}\frac{u_c-u_d}{c-d} = :m_u$.
\item[(ii)] $\sup_{X}\frac{u_a-u_b}{a-b}=\sup_{ X}\frac{u_c-u_d}{c-d} = :M_u$,
\end{enumerate}
Hence, $t \to u_t$ is Lipschitz continuous in $t$, with Lipschitz constant $\max\{|M_u|,|m_u|\}$.
\end{main_theorem}

With this result in our pocket we turn to constructing weak geodesic rays. We say that a weak geodesic segment $t \to u_t$ is linear if  $u_t = u_0 + M_u t=u_0 + m_u t$. If additionally $m_u=M_u=0$, then $t \to u_t$ is constant. Clearly, $u_t$ is not linear if and only if $M_u > m_u$. In this case, if $\alpha > - \infty$, then one can always obtain  $M_u = 0$ and $m_u = -1$ by a translation and re-scaling:
$$\tilde u_t = u_{\alpha +\frac{t-\alpha}{M_u - m_u}}(x) - \frac{M_u (t-\alpha)}{M_u - m_u}.$$
One is naturally lead to the following notion, that will be especially useful in our investigation of weak geodesic rays:

\begin{main_definition} We say that a weak geodesic segment $ (\alpha,\beta) \ni t \to u_t \in  \textup{PSH}(X,\omega)$ is normalized if it is constant or $M_u=0$ and $m_u=-1$.
\end{main_definition}

Given a normalized weak geodesic ray $t \to u_t$, by Lipschitz continuity (Theorem \ref{m_norm_thm}) we have that $u_0 = \lim_{t \to 0}u_t \in \textup{PSH}(X,\omega)\cap L^\infty(X)$. Since this last limit is uniform in $X$, it also follows that $\sup_X (u_{t_1} - u_{t_0}) = 0, \ t_1,t_0 \in [0,+\infty), t_0 < t_1$ hence:
$$\begin{cases}
\ u_{t_1} \leq u_{t_0},\ 0 \leq t_0 < t_1,\\
\ \inf_{X}u_0\leq \sup_{X}u_t \leq \sup_{X}u_0, \ t \in (0,+\infty).
\end{cases}$$
It is well known that for any $C \in \Bbb R$ sets of the type
$$\{ v \in \textup{PSH}(X,\omega)| -C \leq \sup v \leq C\}$$
are compact in $\textup{PSH}(X,\omega)$ equipped with the $L^1(X)$ topology.
This implies that the decreasing pointwise limit $u_{\infty}=\lim_{t \to +\infty}u_t$ is $\omega$-psh and it is different from $-\infty$.

For $\phi,\psi \in \textup{PSH}(X,\omega)$, $\psi \leq \phi$ with $\phi$ bounded and $\psi$ possibly unbounded, our goal is to construct a normalized weak geodesic ray $t \to v_t$ such that $v_0 = \phi$ and $v_\infty =\psi$. As we shall see this is not always possible, but whenever it can be done, our construction below will provide such a ray (see Corollary \ref{infinitypotential}).

We introduce the following set of normalized weak geodesic rays
$$\mathcal R(\phi,\psi) = \{ v_t\textup{ is a normalized weak ray with } v_0=\lim_{t \to 0}v_t= \phi \textup{ and } v_\infty=\lim_{t \to \infty}v_t\geq \psi \},$$
where the limits are pointwise, but by Theorem \ref{m_norm_thm} the first is perforce uniform.
By $(0,l) \ni t \to u^l_t \in \textup{PSH}(X,\omega)$ we denote the unique weak geodesic segments joining $\phi$ with $\max\{\phi-l,\psi\}$.

Additionally, let $c_\psi$ be the limit
$$c_\psi = \lim_{l \to +\infty} \frac{AM(\max\{-l,\psi\})}{l},$$
where $AM( \cdot)$ is the Aubin-Mabuchi energy of a bounded $\omega$-psh function. As we shall see in Section 2.3, the constant $c_\psi$ is well defined and finite.

\begin{main_theorem}[Theorem 4.1] \label{intr_ray_const}For any $\phi,\psi \in \textup{PSH}(\omega)$ with $\phi$ bounded and $\psi \leq \phi$, the weak geodesic segments $u^l$ form an increasing family. The upper semicontinuous regularization of their limit $v(\phi,\psi) = \textup{usc}(\lim_{l \to \infty} u^l)$ is a weak geodesic ray for which the following hold:
\renewcommand{\theenumi}{(\roman{enumi})}
\begin{enumerate}
\item[(i)] $v(\phi,\psi)_t = \textup{usc}(\lim_{l \to \infty} u^l_t)$ for any $t \in (0,+\infty)$.

\item[(ii)] $v(\phi,\psi)  \in \mathcal R(\phi,\psi)$, more precisely $v(\phi,\psi) = \inf_{v \in \mathcal R(\phi,\psi)}v$. In particular, $t \to v(\phi,\psi)_t$ is constant if and only if $\mathcal R(\phi,\psi)$ contains only the constant ray $\phi$.

\item[(iii)] $AM(v(\phi,\psi)_t)=AM(\phi)+c_{\psi}t$, in particular $t \to v(\phi,\psi)_t$ is constant if and only if $\psi \in \mathcal E(X,\omega)$.
\end{enumerate}
\end{main_theorem}

In a nutshell, the above theorem says that the ray $v(\phi,\psi)$ is the lower envelope of the elements of $\mathcal R(\phi,\psi)$ and it is constant if and only if $\mathcal R(\phi,\psi)$ contains only the constant ray $t \to \phi$, which in turn is equivalent to $\psi \in \mathcal E(X,\omega)$.

For the definition of the class $\mathcal E(X,\omega) \subset \textup{PSH}(X,\omega)$ we refer the reader to Section 2.3. This class was introduced in \cite{gz} and it was used to solve global Monge-Amp\`ere equations with very rough data. As an intermediate result, we prove in Section 2.3 that $\psi \in \mathcal E(X,\omega)$ if and only if $c_\psi=0$.

Ever since the importance of geodesic rays was pointed out in Donaldson's program \cite{d1}, there have been many papers on methods how to construct weak geodesic rays. We mention \cite{at,ps,c2,ct,rwn1}, to indicate only a few articles in a very fast expanding literature. At first sight, our construction below is perhaps most reminiscent of \cite{c2} (we construct our ray out of segments as well), but our conclusions and the questions investigated seem to be entirely different.

The elements $v \in \mathcal E(X,\omega)$ are usually unbounded but have very mild singularities. In particular, by Corollary 1.8 \cite{gz}, at any $x \in X$ the Lelong number of $v$ is zero. However, as noted in \cite{gz}, this property does not characterize $\mathcal E(X,\omega)$. Our next theorem tries to fill this void, that is to characterize elements of $\mathcal E(X,\omega)$ in terms of the mildness of their singularities.

Given usc functions $b_0,b_1$ one can define the envelopes
$$P(b_0) = \sup \{ \psi \leq b_0: \psi \in \textup{PSH}(X,\omega)\},$$
$$P(b_0,b_1)= P(\min \{ b_0,b_1\}).$$
As the upper semicontinuous regularization $\textup{usc}(P(b_0))$ is a competitor and $P(b_0) \leq \textup{usc}(P(b_0))$, it follows that $P(b_0) \in \textup{PSH}(X,\omega)$ and the same is true for $P(b_0,b_1)$.

For $\psi,\psi' \in \textup{PSH}(X,\omega)$ we say that $\psi$ and $\psi'$ have the same singularity type if and only if there exists $C >0$ s.t.
$$\psi' - C < \psi < \psi' + C.$$
This induces an equivalence relation on $\textup{PSH}(X,\omega)$ and we denote each class by $[\psi]$, given a representative $\psi \in \textup{PSH}(X,\omega)$.

Suppose now that $\phi,\psi \in \textup{PSH}(X,\omega)$ with $\phi \in L^\infty(X)$. In \cite{rwn1} the envelope of $\phi$ with respect to the singularity type of $\psi$ was considered in the following manner:
$$P_{[\psi]}(\phi)= \textup{usc}\Big(\lim _{C \to +\infty}P(\psi+C,\phi)\Big).$$
Assuming that $\psi$ has analytic singularities, one can show that $\psi$ has the same singularity type as $P_{[\psi]}(\phi)$. After making this observation, in \cite[Remark 4.6]{rwn1},\cite[Remark 3.9]{rwn2} the authors ask weather this holds for general $\psi$. This is not the case, as our next result says that given a continuous potential $\phi$, the singularities of the envelope $P_{[\psi]}(\phi)$ disappear once $\psi \in \mathcal E(X,\omega)$.

\begin{main_theorem}[Theorem 5.2] Suppose $\psi \in \textup{PSH}(X,\omega)$ and $\phi \in \textup{PSH}(X,\omega) \cap C(X)$. Then $\psi \in \mathcal E(X, \omega)$ if and only if
$$P_{[\psi]}(\phi)=\phi.$$
\end{main_theorem}
Interestingly, one direction in the proof of this theorem relies on the findings of Theorem 2, although in the statement of this result weak geodesics are not mentioned at all. What seems even more intriguing, the above result can be used to construct very general geodesic segments joining points of $\mathcal E(X, \omega)$ (see \cite{d2}).

Suppose that $\phi,\psi$ are as specified in the above theorem and $\psi$ has a non-zero Lelong number at $x \in X$. This means that in a neighborhood of $x$ we have $$\psi(y) < c \log \|y-x\| + d$$ for some $c,d >0$. Having alternative definitions of the pluricomplex Green function in mind, one observes that this estimate implies that $P_{[\psi]}(\phi)$ has a logarithmic singularity at $x$. Hence, the condition $P_{[\psi]}(\phi)=\phi$ guarantees that all Lelong numbers of $\psi$ are zero, justifying our earlier claim that the above theorem is a characterization of the elements of $\mathcal E(X,\omega)$ in terms of the mildness of their singularities.

Lastly, we compare our construction of weak geodesic rays to the one in \cite{rwn1}. For details on the terminology we refer to Section 2.4. Courtesy of an argument provided to us by Ross and Witt-Nystr\"om, the condition of 'small unbounded locus' can be removed from the definition of a test curve (Theorem \ref{RWN_main}). As a consequence of this and Theorem \ref{m_norm_thm}, we remark that all weak geodesic rays can be constructed using analytic test configurations (Corollary \ref{RWN_main_inverse}).  At last, we conclude that the geodesic rays we constructed in Theorem \ref{intr_ray_const} can be recovered using very specific test curves:

\begin{main_theorem}[Theorem 6.1] Suppose $\phi,\psi \in \textup{PSH}(\omega)$ with $\phi$ bounded and $\psi \leq \phi$. Then the weak geodesic ray $t \to v(\phi,\psi)_t$  is the same as the  ray obtained from a special test curve $\tau \to \gamma^*_\tau$ using the method of \cite{rwn1}.
\end{main_theorem}

\emph{Acknowledgement.} I would like to thank L. Lempert for his guidance and for his patience during the years. I would also like to thank S. Boucksom, M. Jonsson, Y.A. Rubinstein for useful discussions. Special thanks goes to J. Ross and D. Witt-Nystr\"om for allowing me to include their proof of Theorem \ref{RWN_main}. Finally, I would like to thank the anonymous referee for his suggestions that improved the paper greatly.

\section{Preliminaries}

\subsection{Berndtsson's construction}

In this section we recall Berndtsson's construction (\cite[Section 2.1]{br1}) of a weak solution to the Dirichlet problem associated to the geodesic equation in the space of K\"ahler potentials:

\begin{alignat}{2}\label{bvp_Bern}
&u \in \textup{PSH}(S_{0,1}\times X, \omega)\cap L^\infty(S_{0,1}\times X)\nonumber\\
&(\pi^*\omega + i \partial \overline{\partial}u)^{n+1}=0 \nonumber\\
&u(t+ir,x) =u(t,x) \ \forall x \in X, t \in (0,1), r \in \Bbb R \\
&\lim_{t \to 0,1}u_t(x)=u_{0,1}(x),  \forall x \in X\nonumber,
\end{alignat}
where $u_0,u_1 \in \textup{PSH}(X,\omega)\cap L^\infty(X)$ and the limits are uniform in $X$. We introduce the following set of weak subgeodesics:
$$\mathcal S = \{v \textup{ is a weak subgeodesic with }\lim_{t \to 0,1}v_t \leq u_{0,1} \},$$
where the boundary limits are assumed to be only pointwise in $X$. Our candidate solution is defined by taking the upper envelope of this family:
$$u = \sup_{v \in S}v.$$
Since $u_0,u_1$ are bounded, for $A>0$ big enough we have  that $v_t = \max\{u_0 - At, u_1 - A(1-t) \}\in \mathcal S$. Since all elements of $S$ are convex in $t$, we have the estimate:
\begin{equation}\label{Bern_est}
v_t \leq u_t \leq u_0 + t(u_1 - u_0).
\end{equation}
This estimate is also true for the upper semicontinuous regularization $u^*$ of $u$ as well, hence $u^* \in \mathcal S$ implying $u=u^*$. The fact
$$(\pi^*\omega + i \partial \overline{\partial}u)^{n+1}=0$$
follows now from Bedford-Taylor theory adapted to this setting. Uniqueness follows from the maximum principle \cite[Theorem 6.4]{b1}, as we assumed in (\ref{bvp_Bern}) that the boundary limits are uniform. What is more, it follows from (\ref{Bern_est}) that $t \to u_t$ is uniformly Lipschitz continuous in the $t$ variable.

Although we will not use it here, let us mention that for $u_0,u_1 \in \textup{PSH}(X,\omega)\cap C^\infty(X)$ it was proved in \cite{c1} that $u$ has bounded Laplacian, and this regularity is optimal as later observed in \cite{dl}.

\subsection{The Aubin-Mabuchi energy}

The Aubin-Mabuchi energy is a concave functional $AM : \textup{PSH}(X,\omega)\cap L^\infty(X) \to \Bbb R$, given by the formula:
$$AM(v)=\frac{1}{(n+1)}\sum_{j=0}^n\int_{X} v\omega^j\wedge (\omega + i\partial\bar\partial v)^{n-j}.$$
One can easily compute that for $u,v \in \textup{PSH}(X,\omega)\cap L^\infty(X)$ we have
\begin{equation}\label{AM_diff}AM(u)-AM(v)=\frac{1}{(n+1)}\sum_{j=0}^n\int_{X} (u-v)(\omega +i\partial\bar\partial u)^j\wedge (\omega + i\partial\bar\partial v)^{n-j},
\end{equation}
in particular
\begin{equation}\label{AM_const}AM(u + c)=AM(u)+c, \ c \in\Bbb R.
\end{equation}
The Aubin-Mabuchi functional is important for geodesics in the space of K\"ahler potentials because of the following result:

\begin{theorem}\textup{\cite[Proposition 6.2]{bbgz}} \label{AM_geod} For a weak subgeodesic $(\alpha,\beta) \ni t \to u_t \in \textup{PSH}(X,\omega)$,  the correspondence $t \to AM(u_t), \ t \in (\alpha,\beta)$ is convex . Moreover, the subgeodesic $t \to u_t$ is a geodesic  if and only if $t \to AM(u_t)$ is linear.
\end{theorem}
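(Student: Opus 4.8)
The plan is to reduce the whole statement to a single pointwise identity relating the Monge--Amp\`ere mass of the complexification $u$ on $S_{\alpha\beta}\times X$ to the second derivative of $t\mapsto AM(u_t)$, proving it first for smooth $u$ and then in general by regularization. Throughout write $s=t+i\tau$ for the strip coordinate and $\omega_{u_t}=\omega_0+i\partial\bar\partial u_t$, and recall that $\omega$ is the pullback of $\omega_0$, hence invariant under translations in $s$.

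First I would treat the case where the complexification $u$ is smooth. Differentiating (\ref{AM_diff}) gives $\frac{d}{dt}AM(u_t)=\int_X\dot u_t\,\omega_{u_t}^n$; differentiating once more and integrating by parts on the closed manifold $X$ yields
$$\frac{d^2}{dt^2}AM(u_t)=\int_X\ddot u_t\,\omega_{u_t}^n-n\int_X i\partial\dot u_t\wedge\bar\partial\dot u_t\wedge\omega_{u_t}^{n-1}.$$
Separately, because $u$ depends on $s$ only through $t=\mathrm{Re}\,s$, expanding $(\omega+i\partial\bar\partial u)^{n+1}$ and keeping the unique contribution containing $ds\wedge d\bar s$ produces the pointwise identity of smooth forms
$$(\omega+i\partial\bar\partial u)^{n+1}=c_n\Big(\ddot u_t\,\omega_{u_t}^n-n\, i\partial\dot u_t\wedge\bar\partial\dot u_t\wedge\omega_{u_t}^{n-1}\Big)\wedge dt\wedge d\tau,$$
with $c_n>0$ a dimensional constant. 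Comparing the two displays, the pushforward under $S_{\alpha\beta}\times X\to S_{\alpha\beta}$ of the measure $(\omega+i\partial\bar\partial u)^{n+1}$ equals $c_n\,\big(\tfrac{d^2}{dt^2}AM(u_t)\big)\,dt\wedge d\tau$. Hence, for smooth $u$, the function $t\mapsto AM(u_t)$ is convex exactly when this measure is nonnegative, which it always is, and it is affine exactly when the measure vanishes, which happens precisely when $(\omega+i\partial\bar\partial u)^{n+1}=0$, i.e. exactly (\ref{geodesic_eq}).

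Next I would remove the smoothness assumption by working on an arbitrary compact subinterval $[\alpha',\beta']\subset(\alpha,\beta)$ and choosing, by the standard Bedford--Taylor/Demailly-type regularization on $\overline{S_{\alpha',\beta'}}\times X$, a sequence $u^j$ of smooth, (strictly) $\omega$-plurisubharmonic functions decreasing to $u$. Since $\omega$ is translation-invariant in $s$ and $u$ is already independent of $\mathrm{Im}\,s$, a local mollification can be taken so as to keep this $\mathrm{Im}\,s$-independence, so that each $u^j$ is the complexification of a smooth subgeodesic on a slightly smaller $t$-interval, and the smooth case gives that $t\mapsto AM(u^j_t)$ is convex. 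A pointwise limit of convex functions is convex, so the first assertion follows once we know that $AM(u^j_t)\to AM(u_t)$ for every $t$; this is the standard continuity of $AM$ along decreasing sequences of bounded $\omega_0$-plurisubharmonic functions, obtained from (\ref{AM_diff}) and the Bedford--Taylor convergence of the mixed masses $\omega_{u^j_t}^k\wedge\omega_{u_t}^{n-k}$, passing to the limit in $\int_X(u^j_t-u_t)\,\omega_{u^j_t}^k\wedge\omega_{u_t}^{n-k}$ by the usual two-index argument since $u^j_t-u_t\downarrow0$.

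Finally, for the geodesic characterization I would pass to the limit in the identity of the smooth case. As $u^j\downarrow u$ we have $(\omega+i\partial\bar\partial u^j)^{n+1}\to(\omega+i\partial\bar\partial u)^{n+1}$ weakly by Bedford--Taylor, hence the same for their pushforwards to $S_{\alpha\beta}$; meanwhile Dini's theorem turns $AM(u^j_\cdot)\to AM(u_\cdot)$ into locally uniform convergence, so the distributional second derivatives converge as well. The limiting identity then reads: the pushforward to $S_{\alpha\beta}$ of the nonnegative measure $(\omega+i\partial\bar\partial u)^{n+1}$ equals $c_n$ times (the $d\tau$-thickening of) the distributional second derivative of $t\mapsto AM(u_t)$. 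This again gives convexity, and, because the pushforward of a nonnegative measure is zero if and only if the measure itself is zero, $t\mapsto AM(u_t)$ is affine if and only if $(\omega+i\partial\bar\partial u)^{n+1}=0$, i.e. if and only if $t\mapsto u_t$ is a weak geodesic, as asserted. I expect the main obstacle to be the second step: performing a decreasing smooth (strictly $\omega$-psh) regularization on the noncompact product $S_{\alpha\beta}\times X$ while preserving independence of $\mathrm{Im}\,s$, and rigorously justifying the convergences $AM(u^j_t)\to AM(u_t)$ and $(\omega+i\partial\bar\partial u^j)^{n+1}\to(\omega+i\partial\bar\partial u)^{n+1}$; the form computation in the first step, though somewhat tedious, is routine.
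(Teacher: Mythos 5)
The paper itself does not prove this statement: it is imported verbatim from \cite[Proposition 6.2]{bbgz}, and your argument is essentially a reconstruction of the proof given there. The mechanism you identify is exactly the right one, namely the pushforward identity
$$\pi_*\big((\omega+i\partial\bar\partial u)^{n+1}\big)=c_n\,\Big(\tfrac{d^2}{dt^2}AM(u_t)\Big)\,dt\wedge d\tau,$$
checked first for smooth $\mathrm{Im}\,s$-invariant $\omega$-psh functions and then extended by Bedford--Taylor convergence; your smooth computation, the convergence bookkeeping (pointwise limits of convex functions are convex, locally uniform convergence of the energies gives weak convergence of the distributional second derivatives, continuity of $AM$ along decreasing sequences), and the final observation that a nonnegative measure with vanishing pushforward is zero are all correct.

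The one genuine gap is the step you flag yourself, and as written it is not justified: you need a sequence of \emph{globally} defined smooth $\omega$-psh functions $u^j$ on $S_{\alpha'\beta'}\times X$, independent of $\mathrm{Im}\,s$, decreasing to $u$. ``Local mollification'' does not produce this: mollifying in coordinate charts gives functions that are psh only chart by chart and do not glue to a global $\omega$-psh function; gluing is precisely the content of the Richberg/Demailly/Blocki--Kolodziej regularization theorems, which are stated for compact manifolds, whereas here the product is noncompact and $\omega=\pi^*\omega_0$ is degenerate in the strip directions (so in particular there are no ``strictly'' $\omega$-psh competitors without adding something like $\varepsilon(\mathrm{Re}\,s)^2$). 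Two standard repairs: (a) avoid global regularization altogether by proving the pushforward identity distributionally --- test against $\chi(t)\in C_c^\infty(\alpha,\beta)$, expand $AM(u_t)$ as in (\ref{AM_diff}), and integrate by parts in the Bedford--Taylor sense directly on $S_{\alpha\beta}\times X$, where only local boundedness of $u$ is used and each integration by parts is justified by decreasing local mollification together with a partition of unity on $X$; this is in substance how the cited reference argues; or (b) invoke a regularization theorem valid on such products (not plain mollification), and then restore $\mathrm{Im}\,s$-invariance a posteriori by averaging the approximants over translations in $\mathrm{Im}\,s$ --- an average of $\omega$-psh functions against a probability measure is again $\omega$-psh, smoothness and the decreasing convergence to $u$ are preserved since $u$ itself is invariant. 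With either repair in place your proof is complete and coincides with the argument behind the quoted result.
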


The following well known estimate will be essential in our later investigations about the Aubin-Mabuchi energy. For completeness we include a proof here.

\begin{proposition} \textup{\cite[Proposition 2.8]{begz}}\label{AM_est} For $u \in \textup{PSH}(X,\omega)\cap L^\infty(X)$ with $u \leq 0$ the following holds:
$$\int_X u (\omega+i\partial\bar\partial u)^n \leq AM(u)\leq \frac{1}{n+1}\int_X u (\omega+i\partial\bar\partial u)^n$$
\end{proposition}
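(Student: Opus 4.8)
The plan is to prove both inequalities by comparing $u$ with the intermediate "mixed" quantities that interpolate between $\int_X u\,(\omega_0+i\partial\bar\partial u)^n$ and $AM(u)$. Set $\omega_u = \omega_0 + i\partial\bar\partial u$ and define, for $0 \le j \le n$,
$$I_j = \int_X u\, \omega_0^j \wedge \omega_u^{n-j},$$
so that $(n+1)AM(u) = \sum_{j=0}^n I_j$, while $I_0 = \int_X u\,\omega_u^n$ and $I_n = \int_X u\,\omega_0^n$. Since $u \le 0$, the right-hand inequality $AM(u) \le \frac{1}{n+1} I_0$ will follow once I show the chain $I_0 \le I_1 \le \dots \le I_n$, because then $(n+1)AM(u) = \sum_j I_j \le (n+1) I_0 \cdot (\text{wrong direction})$ — so I must be careful about the direction. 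In fact $u \le 0$ forces each $I_j \le 0$, and the natural monotonicity is $I_0 \le I_1 \le \cdots \le I_n \le 0$; summing gives $(n+1)AM(u) = \sum I_j \le (n+1) I_n \le 0$ is again not what I want, so the correct reading is: $I_0 = \min_j I_j \le$ average $= (n+1)AM(u)/(n+1)$... let me restate cleanly. The monotonicity $I_0 \le I_1 \le \cdots \le I_n$ gives $I_0 \le \frac{1}{n+1}\sum_j I_j = AM(u)$, which is the \emph{left} inequality, and it also gives $AM(u) = \frac{1}{n+1}\sum_j I_j \le \frac{1}{n+1}(n+1) I_n$... no. The clean statement: from $I_0 \le I_j$ for all $j$ we get $I_0 \le AM(u)$ (left inequality); and we want $AM(u) \le \frac{1}{n+1} I_0$, i.e. $\sum_{j=0}^n I_j \le I_0$, i.e. $\sum_{j=1}^n I_j \le 0$, which holds because each $I_j \le 0$. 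So the right inequality is immediate from $u \le 0$, and the left inequality is the one requiring the monotonicity $I_0 \le I_1 \le \cdots \le I_n$.

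So the substantive step is: for $u \le 0$, show $I_{j} \le I_{j+1}$, i.e.
$$\int_X u\, \omega_0^{j}\wedge \omega_u^{n-j} \le \int_X u\, \omega_0^{j+1}\wedge \omega_u^{n-j-1}.$$
Rewriting $\omega_u = \omega_0 + i\partial\bar\partial u$, the difference of the two sides is $-\int_X u\, \omega_0^{j}\wedge (i\partial\bar\partial u) \wedge \omega_u^{n-j-1}$, and I want this to be $\ge 0$, i.e. $\int_X u\, (i\partial\bar\partial u)\wedge \omega_0^j \wedge \omega_u^{n-j-1} \le 0$. Integrating by parts (Stokes, valid for bounded $\omega_0$-psh $u$ by Bedford--Taylor theory), this integral equals $-\int_X i\partial u \wedge \bar\partial u \wedge \omega_0^j \wedge \omega_u^{n-j-1} \le 0$, since $i\partial u \wedge \bar\partial u \ge 0$ and $\omega_0^j \wedge \omega_u^{n-j-1}$ is a positive $(n-1,n-1)$-form (closed positive currents wedge to positive measures). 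Note this step does not even use $u \le 0$; it only uses positivity. Thus $I_j \le I_{j+1}$ for every $j$, giving $I_0 \le AM(u)$.

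The main technical obstacle is justifying the integration by parts and the positivity of the mixed wedge products at the level of \emph{bounded} $\omega_0$-psh functions rather than smooth ones. For $u$ merely bounded, $i\partial\bar\partial u$ is a current, and one must invoke Bedford--Taylor theory: the products $\omega_0^j \wedge \omega_u^{n-j}$ are well-defined closed positive currents, the mixed Monge--Ampère products depend continuously on decreasing sequences of bounded psh functions, and Stokes' theorem holds in the form $\int_X u\, T \wedge i\partial\bar\partial v = \int_X v\, T\wedge i\partial\bar\partial u$ for bounded psh $u,v$ and closed positive $T$ of the appropriate bidegree. Once these standard facts are in place the argument above is just linear algebra on the telescoping sum; I would either cite \cite{begz} / Bedford--Taylor directly for the integration-by-parts identity, or run the argument first for smooth $u$ and pass to the limit by approximating a bounded $\omega_0$-psh $u$ from above. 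Either way, no new difficulty arises beyond bookkeeping.
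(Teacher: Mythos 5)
Your proof is correct and follows essentially the same route as the paper: the right inequality is immediate from $u\le 0$ since each mixed term $\int_X u\,\omega_0^j\wedge(\omega_0+i\partial\bar\partial u)^{n-j}$ is nonpositive, and the left one comes from the monotonicity $\int_X u\,\omega_0^k\wedge\omega_u^{n-k}\le\int_X u\,\omega_0^{k+1}\wedge\omega_u^{n-k-1}$, established by the same integration-by-parts identity producing the nonpositive term $-\int_X i\partial u\wedge\bar\partial u\wedge\omega_0^k\wedge\omega_u^{n-k-1}$. After the initial back-and-forth you settle on exactly the paper's argument, with the Bedford--Taylor justification spelled out a bit more explicitly than the paper bothers to.
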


\begin{proof} As $u \leq 0$, the second estimate is trivial. To prove the first estimate, we observe that it is enough to argue that
$$\int_X u \omega^k \wedge (\omega + i\partial \bar\partial u)^{n-k} \leq \int_X u \omega^{k+1} \wedge (\omega + i\partial \bar\partial u)^{n-k-1}, \ k \in \{0,\ldots,n-1 \}.$$
However this follows easily as we have
$$\int_X u \omega^k \wedge (\omega + i\partial \bar\partial u)^{n-k} = \int_X u \omega^{k+1} \wedge (\omega + i\partial \bar\partial u)^{n-k-1}-\int_X i\partial u \wedge \bar \partial u \wedge \omega^{k} \wedge (\omega + i\partial \bar\partial u)^{n-k-1}.$$
\end{proof}

The last result in this section is a kind of domination principle for the Aubin-Mabuchi energy. Although we could not find a reference for it, its proof is implicit in many standard arguments throughout the literature. See for example \cite[Theorem 1.1]{b2}.
\begin{proposition} \label{energy_dom}Suppose $u,v \in \textup{PSH}(X,\omega)\cap L^\infty(X)$ with $u \geq v$. If $AM(u)=AM(v)$, then $u=v$.
\end{proposition}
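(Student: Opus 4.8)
The plan is to first convert the equality of energies into a concentration statement for a Monge--Amp\`ere measure by means of the polarization identity (\ref{AM_diff}), and then to exclude the set $\{u>v\}$ by a comparison argument after a small regularization of $u$. For the first step, note that by (\ref{AM_diff}),
$$AM(u)-AM(v)=\frac1{n+1}\sum_{j=0}^n\int_X(u-v)(\o_0+i\del\dbar u)^j\w(\o_0+i\del\dbar v)^{n-j},$$
and since $u\geq v$ every summand is the integral of a nonnegative function against a positive Borel measure. Hence the hypothesis $AM(u)=AM(v)$ forces each summand to vanish; taking $j=0$ gives $\int_X(u-v)(\o_0+i\del\dbar v)^n=0$, and as $u-v\geq 0$ this says that the measure $(\o_0+i\del\dbar v)^n$ gives no mass to $D:=\{u>v\}$.

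Suppose for contradiction that $D\neq\emptyset$. Then, by the sub-mean value inequality for $\o_0$-psh functions together with the upper semicontinuity of $v$, $D$ has positive Lebesgue measure. I would set $C:=-\inf_X v$, so that $\rho:=-C$ is a constant $\o_0$-psh function with $\rho\leq v\leq u$, and for $\eps\in(0,1)$ put $u_\eps:=(1-\eps)u+\eps\rho\in PSH(X,\o_0)\cap L^\infty(X)$. Then $\o_0+i\del\dbar u_\eps=(1-\eps)(\o_0+i\del\dbar u)+\eps\o_0$, so, expanding the power, $(\o_0+i\del\dbar u_\eps)^n\geq\eps^n\o_0^n$ as measures. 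Moreover $u_\eps\leq u$, so $D_\eps:=\{u_\eps>v\}\subseteq D$ and hence $(\o_0+i\del\dbar v)^n(D_\eps)=0$; and since $u_\eps\to u$ uniformly, the sets $\{u-v>\eps K\}$ (with $K$ a fixed constant bounding $|u_\eps-u|/\eps$) are contained in $D_\eps$ and increase to $D$ as $\eps\downarrow0$, so that $D_\eps$ has Lebesgue measure at least $\mathrm{Leb}(D)/2>0$ once $\eps$ is small.

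Finally I would invoke the comparison principle for the bounded $\o_0$-psh functions $v$ and $u_\eps$, which gives
$$\int_{\{v<u_\eps\}}(\o_0+i\del\dbar u_\eps)^n\ \leq\ \int_{\{v<u_\eps\}}(\o_0+i\del\dbar v)^n,$$
i.e. $\int_{D_\eps}(\o_0+i\del\dbar u_\eps)^n\leq(\o_0+i\del\dbar v)^n(D_\eps)=0$; but on the other hand $\int_{D_\eps}(\o_0+i\del\dbar u_\eps)^n\geq\eps^n\int_{D_\eps}\o_0^n>0$ for $\eps$ small, a contradiction. Therefore $D=\emptyset$, i.e. $u\leq v$, and together with $u\geq v$ this yields $u=v$. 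The step I expect to be the crux is precisely the passage from ``$(\o_0+i\del\dbar v)^n$ does not charge $\{u>v\}$'' to ``$\{u>v\}=\emptyset$'': this is essentially the Monge--Amp\`ere domination principle, and the delicate points are that a nonempty $\{u>v\}$ must carry positive Lebesgue (hence $\o_0^n$-) mass, and that the regularization must be chosen so as to keep a definite proportion of this mass while making the Monge--Amp\`ere measure nondegenerate, after which the comparison principle closes the argument. Alternatively, once the reduction in the first paragraph is in place, one may simply quote the domination principle, for instance as in \cite[Theorem~1.1]{b2}.
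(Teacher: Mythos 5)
Your proof is correct, but it takes a genuinely different route from the paper's. You use only the $j=0$ term of the polarization identity (\ref{AM_diff}) to conclude that $(\omega_0+i\partial\bar\partial v)^n$ puts no mass on $\{u>v\}$, and then run the classical domination-principle argument: a nonempty $\{u>v\}$ must have positive Lebesgue measure by the sub-mean value property, the mixture $u_\eps=(1-\eps)u+\eps\inf_X v$ satisfies $(\omega_0+i\partial\bar\partial u_\eps)^n\geq\eps^n\omega_0^n$, and the comparison principle for bounded $\omega_0$-psh functions on $\{v<u_\eps\}$ gives the contradiction. The steps check out: the comparison inequality is quoted in the correct direction, and the inclusions $\{u-v>\eps K\}\subseteq\{u_\eps>v\}\subseteq\{u>v\}$ do retain a definite amount of Lebesgue mass as $\eps\downarrow 0$. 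The paper argues differently: it uses the vanishing of \emph{all} $n+1$ terms of (\ref{AM_diff}), and an inductive integration-by-parts argument with the Cauchy--Schwarz inequality for the Monge--Amp\`ere operator, to show $\int_X i\partial(u-v)\wedge\bar\partial(u-v)\wedge\omega_0^{n-1}=0$, so that $u-v$ is constant and then zero. The trade-off is this: your argument imports the comparison principle --- in effect it reproves the domination principle that the paper itself points to by citing \cite[Theorem 1.1]{b2} and establishes for the class $\mathcal E(X,\omega_0)$ in Proposition \ref{domination} --- while needing only one term of the polarization identity; the paper's proof is self-contained at the level of integration by parts, avoids the comparison principle and capacity-type tools altogether, and stays within the energy-functional technique it uses elsewhere, at the cost of a slightly longer inductive computation.
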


\begin{proof}
Since $u \geq v$ it follows from (\ref{AM_diff}) that
\begin{equation}\label{egy}
\int_X (u - v)(\omega +i\partial\bar\partial u)^j\wedge (\omega + i\partial\bar\partial v)^{n-j}=0, \ j = 0,...,n.
\end{equation}
We are finished if we can prove that
\begin{equation}\label{main_id}
\int_X i\partial(u - v)\wedge \bar\partial(u - v)\wedge \omega^{n-1}=0.
\end{equation}
The first step is to prove that
\begin{equation}\label{main_id1}
\int_X i \partial(u - v)\wedge\bar\partial(u - v)\wedge \omega\wedge(\omega +i\partial\bar\partial u)^j\wedge (\omega + i\partial\bar\partial v)^{n-j-2}=0, \ j = 0,...,n-2.
\end{equation}
It follows from (\ref{egy}) that
\begin{flalign}\label{CauchySchwarz}
0=&\int_X (u - v)i\partial\bar\partial(u - v)\wedge(\omega +i\partial\bar\partial u)^j\wedge (\omega + i\partial\bar\partial v)^{n-j-1} \nonumber\\
 =&-\int_X i \partial(u - v)\wedge \bar\partial(u - v)\wedge(\omega +i\partial\bar\partial u)^j\wedge (\omega + i\partial\bar\partial v)^{n-j-1}, \ j = 0,...,n-1.
\end{flalign}
Using this we now write
\begin{flalign*}
&\int_X i \partial(u - v)\wedge \bar\partial(u - v)\wedge\omega\wedge(\omega +i\partial\bar\partial u)^j\wedge (\omega + i\partial\bar\partial v)^{n-j-2}=\\
&=-\int_X i \partial(u - v)\wedge \bar\partial(u - v)\wedge i\partial \bar \partial v\wedge(\omega +i\partial\bar\partial u)^j\wedge (\omega + i\partial\bar\partial v)^{n-j-2}\\
&=\int_X i \partial(u - v)\wedge i\partial\bar\partial(u - v)\wedge \bar \partial v\wedge(\omega +i\partial\bar\partial u)^j\wedge (\omega + i\partial\bar\partial v)^{n-j-2}\\
&=\int_X i \partial(u - v)\wedge \bar \partial v\wedge (\omega + i\partial\bar\partial u)\wedge(\omega +i\partial\bar\partial u)^j\wedge (\omega + i\partial\bar\partial v)^{n-j-2}-\\
&-\int_X i \partial(u - v)\wedge \bar \partial v\wedge (\omega + i\partial\bar\partial v)\wedge(\omega +i\partial\bar\partial u)^j\wedge (\omega + i\partial\bar\partial v)^{n-j-2}, \ j = 0,...,n-2.
\end{flalign*}
Using the Cauchy-Schwarz inequality for the Monge-Amp\`ere operator, it follows from (\ref{CauchySchwarz}) that both of the terms in the last sum are zero, proving (\ref{main_id1}). Continuing this inductive process we arrive at (\ref{main_id}).
\end{proof}

\subsection{The class $\mathcal{E}(X,\omega)$}

We recall here a few facts about the class $\mathcal E(X,\omega) \subset \textup{PSH}(X,\omega)$. For the sake of brevity, we take a very minimalistic approach in our presentation. For a more complete treatment we refer the reader to \cite{gz}. For $\gamma \in \textup{PSH}(X,\omega)$, one can define the canonical cutoffs $\gamma_l \in \textup{PSH}(X,\omega), \ l \in \Bbb R$ by the formula
$$\gamma_l = \max \{-l, \gamma \}.$$
By an application of the comparison principle, it follows that the Borel measures
$$\chi_{\{\gamma > -l\}}(\omega + i\partial \bar\partial \gamma_l)^n$$
are increasing in $l$.
Following \cite{begz}, despite the fact that $\gamma$ might be unbounded, one can still make sense of $(\omega + i\partial\bar\partial \gamma)^n$ as the limit of these increasing measures:
\begin{equation}\label{non-plurip}(\omega + i\partial\bar\partial \gamma)^n= \lim_{l \to +\infty} \chi_{\{\gamma > -l\}}(\omega + i\partial \bar\partial \gamma_l)^n.
\end{equation}
Using this definition, $(\omega + i\partial\bar\partial \gamma)^n$ is called the non-pluripolar Monge-Amp\`ere measure of $\gamma$. It is clear from (\ref{non-plurip}) that
$$\int_X (\omega + i\partial\bar\partial \gamma)^n \leq \int_{X}\omega^n =\text{Vol}(X).$$
This brings us to the class $\mathcal E(X,\omega)$. By definition, $\gamma \in \mathcal E(X,\omega)$ if
\begin{equation}\label{Eps_def}
\int_X (\omega + i\partial\bar\partial \gamma)^n=\lim_{l \to \infty} \int_X \chi_{\{\gamma > -l\}}(\omega + i\partial \bar\partial \gamma_l)^n =\text{Vol}(X).
\end{equation}

As detailed in \cite{gz}, most of the classical theorems of Bedford-Taylor theory are valid for the class $\mathcal E(X,\omega)$ as well. For brevity we only mention here a version of the domination principle that we will need later.

\begin{proposition}\textup{\cite{bs}} \label{domination}Suppose $\psi \in \mathcal E(X,\omega)$ and $\phi \in \textup{PSH}(X,\omega) \cap L^\infty(X)$. If $\psi \geq \phi$ a.e. with respect to $(\omega + i\partial\bar\partial \psi)^n$ then $\psi(x) \geq \phi(x), \ x \in X$.
\end{proposition}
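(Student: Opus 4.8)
The plan is to upgrade the $(\omega_0+i\partial\bar\partial\psi)^n$-almost-everywhere hypothesis to a genuine pointwise inequality, using a strict-positivity perturbation of $\phi$ together with the comparison principle for the class $\mathcal{E}(X,\omega_0)$ from \cite{gz}. Note first that any bounded $\omega_0$-psh function $\gamma$ lies in $\mathcal{E}(X,\omega_0)$: for $l$ large $\gamma_l=\gamma$, so by Stokes $\int_X(\omega_0+i\partial\bar\partial\gamma)^n=\int_X\omega_0^n=\textup{Vol}(X)$. Hence the $\mathcal{E}$-comparison principle is applicable to the pair $\psi$ and any bounded $\omega_0$-psh function.

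First I would fix $\epsilon\in(0,1)$ and set $\phi_\epsilon:=(1-\epsilon)\phi+\epsilon\inf_X\phi$, which is a bounded $\omega_0$-psh function since $\phi\in L^\infty(X)$. Two elementary observations: (a) $\phi_\epsilon\le\phi$ on $X$, so $\{\psi<\phi_\epsilon\}\subseteq\{\psi<\phi\}$ and therefore $\int_{\{\psi<\phi_\epsilon\}}(\omega_0+i\partial\bar\partial\psi)^n=0$ by hypothesis; and (b) $\omega_0+i\partial\bar\partial\phi_\epsilon=(1-\epsilon)(\omega_0+i\partial\bar\partial\phi)+\epsilon\omega_0\ge\epsilon\omega_0$, whence $(\omega_0+i\partial\bar\partial\phi_\epsilon)^n\ge\epsilon^n\omega_0^n$. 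Applying the comparison principle to $\psi$ and $\phi_\epsilon$ gives $\int_{\{\psi<\phi_\epsilon\}}(\omega_0+i\partial\bar\partial\phi_\epsilon)^n\le\int_{\{\psi<\phi_\epsilon\}}(\omega_0+i\partial\bar\partial\psi)^n=0$, and combining with (b) forces $\int_{\{\psi<\phi_\epsilon\}}\omega_0^n=0$. Thus $\psi\ge\phi_\epsilon$ outside a set of Lebesgue measure zero. Since two $\omega_0$-psh functions satisfying an inequality Lebesgue-almost-everywhere satisfy it everywhere — in local coordinates each is the pointwise limit of its solid-ball averages, to which the almost-everywhere inequality passes — we conclude $\psi\ge\phi_\epsilon$ on all of $X$. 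Letting $\epsilon\to 0$ yields $\psi\ge\phi$ on $X$, as desired.

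The only genuinely non-formal ingredient is the comparison principle for $\mathcal{E}(X,\omega_0)$, which I would quote from \cite{gz}; everything else is bookkeeping. The single point that requires care is precisely the passage from ``almost everywhere with respect to $(\omega_0+i\partial\bar\partial\psi)^n$'' to ``almost everywhere with respect to Lebesgue measure'', and this is exactly what the perturbation $\phi\mapsto\phi_\epsilon$ is engineered to accomplish: without the strict lower bound $\omega_0+i\partial\bar\partial\phi_\epsilon\ge\epsilon\omega_0$ one cannot convert vanishing of $\int_{\{\psi<\phi_\epsilon\}}(\omega_0+i\partial\bar\partial\phi_\epsilon)^n$ into vanishing of Lebesgue measure, which is what makes the final pointwise comparison of the two quasi-psh functions possible.
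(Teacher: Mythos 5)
Your argument is correct: $\phi_\epsilon=(1-\epsilon)\phi+\epsilon\inf_X\phi$ is a bounded $\omega_0$-psh function lying below $\phi$, bounded $\omega_0$-psh functions do belong to $\mathcal E(X,\omega_0)$, so the comparison principle of \cite{gz} applies to the pair $(\psi,\phi_\epsilon)$, and together with $(\omega_0+i\partial\bar\partial\phi_\epsilon)^n\geq\epsilon^n\omega_0^n$ (positivity of the mixed Bedford--Taylor measures) it gives $\int_{\{\psi<\phi_\epsilon\}}\omega_0^n=0$; the upgrade from Lebesgue-a.e.\ to everywhere via ball averages of local psh representatives, and the limit $\epsilon\to 0$, are both fine. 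Your route differs from the paper's only in the intermediate tool. The paper (following \cite{bs}) uses essentially the same perturbation --- after normalizing $\phi,\psi<0$, its set $\{\psi-\phi<-s-\varepsilon\phi\}$ is your set $\{\psi<(1-\varepsilon)\phi-s\}$ --- but feeds it into Lemma 2.3 of \cite{egz}, obtaining a bound on the Monge--Amp\`ere capacity $\textup{Cap}_{\omega_0}$ of the bad set, and then invokes the fact that capacity dominates Lebesgue measure; this requires the remark that the EGZ lemma remains valid for the class $\mathcal E(X,\omega_0)$. You bypass capacity altogether by letting the background form $\omega_0$ itself supply the strict positivity, so your proof is self-contained modulo the $\mathcal E$-comparison principle. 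What the capacity detour buys is a slightly stronger intermediate conclusion (smallness in capacity rather than merely in volume), which is irrelevant for the present proposition; the endgame (sub-mean value property, then letting the perturbation parameters tend to zero) is identical in both proofs.
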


\begin{proof} We follow the proof in \cite{bs}, which in turn is based on and idea of Zeriahi. By translation, we can assume that both $\phi,\psi$ are negative on $X$. Then
for all $s > 0$ and small enough $\varepsilon >0$ we have:
$$\{\psi - \phi < -s - \varepsilon\phi\}\subseteq\{\psi - \phi < 0\}.$$
Now we can use Lemma 2.3 in \cite{egz}, that is easily seen to be valid for elements of $\mathcal E(X,\omega)$ (one only needs the comparison principle, which is true for elements of $\mathcal E(X,\omega)$, as is proved in \cite{gz}). According to this result we have:
$$\varepsilon^n \textup{Cap}_{\omega}(\{ \psi - \phi < -s - \varepsilon\})\leq \int_{\{\psi - \phi < -s -\varepsilon \phi\}}(\omega + i\partial\bar\partial \psi)^n \leq \int_{\{\psi - \phi < 0\}}(\omega + i\partial\bar\partial \psi)^n =0 .$$
Since the Monge-Amp\`ere capacity dominates the Lebesgue measure, it results that $\psi - \phi \geq -s - \varepsilon$ a.e. with respect to $\omega^n$. By the sub mean-value property of plurisubharmonic functions this estimate extends to $X$. Now letting $s,\varepsilon \to 0$ we obtain the desired result.
\end{proof}

Next, we observe that $l \to \gamma_l = \max\{-l,\gamma\}, \ l \geq 0$ is a decreasing weak subgeodesic ray, hence the map $l \to AM(\gamma_l), \ l \in (0,+\infty)$ is convex and decreasing by Theorem \ref{AM_geod} and (\ref{AM_diff}). From this, it follows that the following quantity is well defined and finite:
$$c_\gamma = \lim_{l \to +\infty} \frac{AM(\gamma_l)}{l} = \lim_{l \to +\infty} \frac{AM(\gamma_l) - AM(\gamma_0)}{l}.$$
Our next result gives a precise formula for $c_\gamma$, one that does not use subgeodesic rays. We also obtain a characterization of  $\mathcal E(X,\omega)$ in terms of this constant.
\begin{theorem} \label{E_energy}Given $\gamma \in \textup{PSH}(X,\omega)$ we have that
$$c_\gamma = \frac{-1}{n+1}\sum_{j=0}^n \lim_{l \to +\infty}\int_{\{\gamma \leq -l\}} \omega^{j} \wedge(\omega +i\partial\bar\partial \gamma_l)^{n-j}.$$
In particular, $\gamma \in \mathcal E(X,\omega)$ if and only if $c_\gamma =0$.
\end{theorem}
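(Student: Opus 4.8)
The plan is to reduce to the case $\gamma\le 0$ by a translation, expand $AM(\gamma_l)$ via the difference formula (\ref{AM_diff}), split every integral into its parts over the regions $\{\gamma\le -l\}$ and $\{\gamma>-l\}$, and show that after dividing by $l$ and letting $l\to\infty$ only the contribution of the contact region $\{\gamma\le -l\}$ survives. First I would note that replacing $\gamma$ by $\gamma-C$ changes neither $c_\gamma$ nor the asserted right-hand side: indeed $(\gamma-C)_l=\gamma_{l-C}-C$, so $AM((\gamma-C)_l)=AM(\gamma_{l-C})-C$, and both sides are limits insensitive to the shift $l\mapsto l-C$. Hence we may assume $\gamma\le 0$, so $\gamma_0=0$ and $AM(\gamma_0)=0$. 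For $0\le j\le n$ set $I_j(l):=\int_{\{\gamma>-l\}}\omega_0^j\wedge(\omega_0+i\partial\bar\partial\gamma_l)^{n-j}$. On the open set $\{\gamma>-l\}$ one has $\gamma_l=\gamma=\gamma_L$ for every $L\ge l$, so by locality of the Bedford--Taylor product $\chi_{\{\gamma>-l\}}\,\omega_0^j\wedge(\omega_0+i\partial\bar\partial\gamma_l)^{n-j}=\chi_{\{\gamma>-l\}}\,\omega_0^j\wedge(\omega_0+i\partial\bar\partial\gamma_L)^{n-j}$; since $\{\gamma>-l\}\subseteq\{\gamma>-L\}$ this makes $l\mapsto I_j(l)$ nondecreasing, and it is bounded by $\int_X\omega_0^n=\text{Vol}(X)$, so $I_j(l)\uparrow V_j$ and $B_j:=\text{Vol}(X)-V_j=\lim_{l\to\infty}\int_{\{\gamma\le -l\}}\omega_0^j\wedge(\omega_0+i\partial\bar\partial\gamma_l)^{n-j}\ge 0$ exists. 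For $j=0$ this limit is exactly the one in (\ref{non-plurip})--(\ref{Eps_def}), so $\gamma\in\mathcal E(X,\omega_0)$ if and only if $B_0=0$.

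By (\ref{AM_diff}) with $v=0$ (and reindexing the sum) $AM(\gamma_l)=\tfrac{1}{n+1}\sum_{j=0}^n\int_X\gamma_l\,\omega_0^j\wedge(\omega_0+i\partial\bar\partial\gamma_l)^{n-j}$. Since $\gamma_l=-l$ on $\{\gamma\le -l\}$ and $\gamma_l=\gamma$ on $\{\gamma>-l\}$, dividing by $l$ turns the $j$-th summand into $-\int_{\{\gamma\le -l\}}\omega_0^j\wedge(\omega_0+i\partial\bar\partial\gamma_l)^{n-j}+\tfrac1l\int_{\{\gamma>-l\}}\gamma\,\omega_0^j\wedge(\omega_0+i\partial\bar\partial\gamma_l)^{n-j}$. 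The first term tends to $B_j$, so the statement reduces to showing that the second term tends to $0$, i.e. $\int_{\{\gamma>-l\}}\tfrac{-\gamma}{l}\,\omega_0^j\wedge(\omega_0+i\partial\bar\partial\gamma_l)^{n-j}\to 0$ for each $j$.

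This is the heart of the argument, and I would prove it by separating scales. Fix $\varepsilon\in(0,1)$ and write $\{\gamma>-l\}=\{\gamma>-\varepsilon l\}\cup\{-l<\gamma\le -\varepsilon l\}$. On the first set $0\le -\gamma/l<\varepsilon$, so its contribution is at most $\varepsilon\int_X\omega_0^j\wedge(\omega_0+i\partial\bar\partial\gamma_l)^{n-j}=\varepsilon\,\text{Vol}(X)$. On the second set $-\gamma/l\le 1$, so its contribution is at most $\int_{\{-l<\gamma\le -\varepsilon l\}}\omega_0^j\wedge(\omega_0+i\partial\bar\partial\gamma_l)^{n-j}=I_j(l)-\int_{\{\gamma>-\varepsilon l\}}\omega_0^j\wedge(\omega_0+i\partial\bar\partial\gamma_l)^{n-j}$; and since $\gamma_l=\gamma=\gamma_{\varepsilon l}$ on the open set $\{\gamma>-\varepsilon l\}$, the subtracted integral equals $I_j(\varepsilon l)$, so the contribution is at most $I_j(l)-I_j(\varepsilon l)\to 0$ as $l\to\infty$. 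Hence $\limsup_{l\to\infty}\int_{\{\gamma>-l\}}\tfrac{-\gamma}{l}\,\omega_0^j\wedge(\omega_0+i\partial\bar\partial\gamma_l)^{n-j}\le \varepsilon\,\text{Vol}(X)$ for every $\varepsilon$, which gives the claim. Therefore $c_\gamma=\lim_{l\to\infty}AM(\gamma_l)/l=\tfrac{-1}{n+1}\sum_{j=0}^n B_j$, which is the asserted identity.

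For the remaining assertion, each $B_j\ge 0$, so $c_\gamma=0$ if and only if all $B_j$ vanish; in particular $c_\gamma=0$ forces $B_0=0$, i.e. $\gamma\in\mathcal E(X,\omega_0)$. For the converse I would establish the monotonicity $B_0\ge B_1\ge\cdots\ge B_n$, together with $B_n=\lim_{l\to\infty}\text{Vol}(\{\gamma\le -l\})=\text{Vol}(\{\gamma=-\infty\})=0$, so that $B_0=0$ forces every $B_j=0$ and hence $c_\gamma=0$. The monotonicity comes from the standard fact that for bounded $\omega_0$-psh functions $u,w$ and any closed positive current $T$ of the appropriate bidegree $\chi_{\{u=w\}}(\omega_0+i\partial\bar\partial\max\{u,w\})\wedge T\ge\chi_{\{u=w\}}(\omega_0+i\partial\bar\partial u)\wedge T$: taking $u=-l$, $w=\gamma_l$ and $T=\omega_0^j\wedge(\omega_0+i\partial\bar\partial\gamma_l)^{n-j-1}$ (so $\max\{u,w\}=\gamma_l$, $\{u=w\}=\{\gamma\le -l\}$ and $\omega_0+i\partial\bar\partial u=\omega_0$) yields $\int_{\{\gamma\le -l\}}\omega_0^j\wedge(\omega_0+i\partial\bar\partial\gamma_l)^{n-j}\ge\int_{\{\gamma\le -l\}}\omega_0^{j+1}\wedge(\omega_0+i\partial\bar\partial\gamma_l)^{n-j-1}$, and passing to the limit gives $B_j\ge B_{j+1}$. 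I expect the $o(l)$ estimate of the previous paragraph to be the main obstacle; once it is in hand, the rest is bookkeeping with (\ref{AM_diff}) and standard properties of (mixed) non-pluripolar Monge--Amp\`ere masses.
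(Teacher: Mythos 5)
Your derivation of the formula for $c_\gamma$ is correct and follows essentially the same route as the paper: reduce to $\gamma\le 0$ by translation, expand $AM(\gamma_l)$ term by term, split each integral over $\{\gamma\le -l\}$ and $\{\gamma>-l\}$, and kill the second contribution by the two-scale $\varepsilon$-splitting combined with plurifine locality and the monotone, bounded convergence of the truncated masses $I_j(l)$ (the paper does $j=0$ explicitly and handles the mixed terms ``by the same line of thought'', which is exactly what you carry out). One small imprecision: $\{\gamma>-l\}$ is not open in the usual topology ($\gamma$ is only usc), so the locality you invoke is locality in the plurifine topology, as in formula (2) of \cite{gz}; this is harmless. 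Where you genuinely diverge is the ``in particular'' part. The paper sandwiches $AM(\gamma_l)/l$ between $\int_X\tfrac{\gamma_l}{l}(\omega_0+i\partial\bar\partial\gamma_l)^n$ and $\tfrac{1}{n+1}$ of it via Proposition \ref{AM_est}, obtaining $-B_0\le c_\gamma\le\tfrac{-1}{n+1}B_0$ for the top-degree deficit $B_0$, so $c_\gamma=0$ iff $B_0=0$ iff $\gamma\in\mathcal E(X,\omega_0)$ with no further input. You instead use $c_\gamma=\tfrac{-1}{n+1}\sum_j B_j$ with $B_j\ge 0$ together with the monotonicity $B_0\ge B_1\ge\cdots\ge B_n$, deduced from a Demailly-type contact-set inequality (equivalently, monotonicity of mixed non-pluripolar masses as in \cite{begz}). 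This works, and in your application it is even easier than the general statement you quote: one of the two functions is the constant $-l$, so approximating $\gamma_l$ by $\gamma_{l-\varepsilon}$, noting that $\{\gamma<-l+\varepsilon\}$ is genuinely open (where the perturbed current is just $\omega_0\wedge T$) and that weak limits preserve domination of a fixed positive measure, gives $(\omega_0+i\partial\bar\partial\gamma_l)\wedge T\ge\chi_{\{\gamma\le -l\}}\,\omega_0\wedge T$ directly; you should, however, state the fact for mixed products with the fixed current $T$, since the standard references formulate Demailly's inequality for the full power. In sum, the paper's route to the $\mathcal E$-characterization is shorter, while yours yields the additional information that all the mixed mass deficits $B_j$ vanish simultaneously; also note that your auxiliary claim $B_n=0$ is not needed once the monotonicity is in hand.
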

\begin{proof} It follows from (\ref{AM_const}) that for any $d  \in \Bbb R$ we have $c_{\gamma +d}=c_\gamma$.  Hence, we can assume that $\gamma < 0$.
First we prove that
\begin{equation}\label{main_lim}
\lim_{l \to +\infty}\int_X \frac{\gamma_l}{l} (\omega+i\partial\bar\partial \gamma_l)^n = -\lim_{l \to +\infty}\int_{\{\gamma \leq -l\}} (\omega+i\partial\bar\partial \gamma_l)^n.
\end{equation}
To see this, since
$$\int_X \frac{\gamma_l}{l} (\omega+i\partial\bar\partial \gamma_l)^n =-\int_{\{\gamma \leq -l\}} (\omega+i\partial\bar\partial \gamma_l)^n+\int_{\{\gamma > -l\}}\frac{\gamma}{l}(\omega+i\partial\bar\partial \gamma_l)^n,$$
it is enough to show that
\begin{equation}
\lim_{l \to +\infty}\int_{\{\gamma > -l\}}\frac{\gamma}{l}(\omega+i\partial\bar\partial \gamma_l)^n = 0.
\end{equation}
Since $\gamma <0$, it is enough to prove that
\begin{equation}\label{lim_eq}
\lim_{l \to +\infty}\int_{\{\gamma > -l\}}\frac{\gamma}{l}(\omega+i\partial\bar\partial \gamma_l)^n \geq 0.
\end{equation}
For any $\varepsilon >0$, we have
$$\int_{\{\gamma > -l\}}\frac{\gamma}{l}(\omega+i\partial\bar\partial \gamma_l)^n\geq  -\int_{\{-\varepsilon l \geq \gamma > -l\}}(\omega+i\partial\bar\partial \gamma_l)^n -\varepsilon  \int_{\{ \gamma > -\varepsilon l \}}(\omega+i\partial\bar\partial \gamma_l)^n. $$
The second term in the sum is bounded below by $-\varepsilon \text{Vol}(X)$, whereas the the first term can be written:
\begin{flalign*}
\int_{\{-\varepsilon l \geq\gamma > -l\}}(\omega+i\partial\bar\partial \gamma_l)^n &= -\int_{\{\gamma > -\varepsilon l\}}(\omega+i\partial\bar\partial \gamma_l)^n + \int_{\{\gamma > -l\}}(\omega+i\partial\bar\partial \gamma_l)^n\\
&= -\int_{\{\gamma > -\varepsilon l\}}(\omega+i\partial\bar\partial \gamma_{\varepsilon l})^n + \int_{\{\gamma > -l\}}(\omega+i\partial\bar\partial \gamma_l)^n,
\end{flalign*}
where the last line follows from locality of the Monge-Amp\`ere measure in the plurifine topology (see formula (2) in \cite{gz}). Since the measures $\chi_{\{\gamma > -l\}}(\omega + i\partial \bar\partial \gamma_l)^n$ increase with $l$, taking the limit in the above identity we obtain
$$\lim_{l \to \infty}\int_{\{-\varepsilon l \geq\gamma > -l\}}(\omega+i\partial\bar\partial \gamma_l)^n=0,$$
proving (\ref{lim_eq}), which in turn implies (\ref{main_lim}). Following exactly the same line of thought, one can prove that
\begin{equation}
\lim_{l \to +\infty}\int_X \frac{\gamma_l}{l} \omega^j \wedge (\omega+i\partial\bar\partial \gamma_l)^{n-j} = -\lim_{l \to +\infty}\int_{\{\gamma \leq -l\}} \omega^j \wedge (\omega+i\partial\bar\partial \gamma_l)^{n-j}, \ j \in {0,\ldots,n}.
\end{equation}
After we sum over $j$ in the above equation, we obtain the formula for $c_\gamma$ in the statement of the theorem.

To prove the last claim, observe that from Proposition \ref{AM_est} it follows that
\begin{equation}\label{est_line}
\int_X \frac{\gamma_l}{l} (\omega+i\partial\bar\partial \gamma_l)^n \leq \frac{AM(\gamma_l)}{l}\leq \frac{1}{n+1}\int_X \frac{\gamma_l}{l} (\omega+i\partial\bar\partial \gamma_l)^n, \ l >0.
\end{equation}
Putting (\ref{est_line}) and (\ref{main_lim}) together we obtain
$$-\lim_{l \to +\infty}\int_{\{\gamma \leq -l\}} (\omega+i\partial\bar\partial \gamma_l)^n \leq \lim_{l \to +\infty}\frac{AM(\gamma_l)}{l}=c_\gamma\leq \frac{-1}{n+1}\lim_{l \to +\infty}\int_{\{\gamma \leq -l\}} (\omega+i\partial\bar\partial \gamma_l)^n.$$
By the definition of the class $\mathcal E(X,\omega)$, $\lim_{l \to +\infty}\int_{\{\gamma \leq -l\}} (\omega+i\partial\bar\partial \gamma_l)^n=0$ if and only if $\gamma \in \mathcal E(X,\omega)$, proving the result.
\end{proof}
\begin{remark} \label{c_alt_def} In the definition of $c_\gamma$ we could have started with the more general decreasing weak subgeodesic ray
$$l \to \tilde \gamma_l = \max \{ \beta - l,\gamma \}$$
for any $\beta \in L^\infty(X) \cap \textup{PSH}(X,\omega)$. As it is easily verified, the resulting constant $\tilde c_\gamma = \lim_{l \to + \infty }AM(\tilde \gamma_l)/l$ is the same as our original $c_\gamma$.
\end{remark}
\subsection{The weak geodesic rays of Ross and Witt-Nystr\"om}

In this section we review the construction of Ross and Witt-Nystr\"om \cite{rwn1} of weak geodesic rays. Although \cite{rwn1} is written in the setting when the K\"ahler structure is integral $([\omega] \in H^2(X,\Bbb Z))$, the whole construction  carries over without changes to our more general situation.

%We say that a potential $\gamma \in \textup{PSH}(X,\omega)$ has small unbounded locus if the the set $\psi^{-1}(-\infty)$ is contained in a closed complete pluripolar set. The starting point of the construction is the following notion:

\begin{definition} A map $\Bbb R \ni \tau  \to \psi_\tau \in \textup{PSH}(X, \omega)$ is called a test curve if
\renewcommand{\theenumi}{(\roman{enumi})}
\begin{enumerate}\label{testcurvedef}
\item[(i)] $\tau \to \psi_\tau(x)$ is concave in $\tau$ for any $x \in X$,
%\item[(ii)] either $\psi_\tau$ has small unbounded locus or $\psi_\tau =-\infty$,
\item[(ii)] There exists $C_\psi >0$ such that $\psi_\tau$ is equal to some bounded potential $\psi_{-\infty} \in \textup{PSH}(X,\omega)\cap L^\infty(X)$ for $\tau < -C_\psi$ and $\psi_\tau = -\infty$ if $\tau > C_\psi$.
\end{enumerate}
\end{definition}

We remark that in definition of test curves from \cite{rwn1}, it is also assumed that each $\psi_\tau$ has small unbounded locus. As it will be clear in Theorem \ref{rwn_lemma1} below, this condition can be omitted.

Given a test curve $\tau \to \psi_\tau$, the curve of singularity types $\tau \to [\psi_\tau]$ is called an \emph{analytic test configuration}. As mentioned in the introduction, given $b_0,b_1$ usc functions on $X$, one can define the envelopes
$$P(b_0) = \sup \{ \psi \leq b_0: \psi \in \textup{PSH}(\omega)\},$$
$$P(b_0,b_1)= P(\min \{ b_0,b_1\}),$$
that are elements of $\textup{PSH}(X,\omega)$. Given $\phi,\psi \in \textup{PSH}(X,\omega)$, $\phi \in L^\infty(X)$, one can introduce the envelope of $\phi$ with respect to the singularity type of $\psi$. This is given by the formula:
$$P_{[\psi]}(\phi)= \text{usc}\Big(\lim _{D \to +\infty}P(\psi+D,\phi)\Big).$$

In the process of obtaining a weak geodesic ray one starts out with a test curve $\tau \to \psi_\tau$, as defined above, and a potential $\phi \in \textup{PSH}(X,\omega) \cap L^\infty(X)$. The first step is to 'maximize' the test curve $\tau \to \psi_\tau$ with respect to $\phi$ by introducing the new test curve
$$\tilde \psi_\tau = P_{[\psi_\tau]}(\phi).$$
The second step is to take the 'inverse' Legendre transform of $\tilde \psi_\tau$:
\begin{equation}\label{BWNgeodDef}
\phi_t = \text{usc}\Big(\sup_{\tau \in \Bbb R}(\tilde \psi_\tau + t\tau)\Big) \in \textup{PSH}(X,\o) \cap L^\infty(X) , \ t \in [0,+\infty).
\end{equation}

Before we state the main result of \cite{rwn1}, we remark that in \eqref{BWNgeodDef} it is possible to omit the upper semi continuous regularization. In fact, this is more generally true for any test curve:

\begin{proposition} \label{uscomit} Suppose $\tau \to \psi_\tau$ is a test curve as given in Definition \ref{testcurvedef}. Then for any $t \geq 0$ we have:
$$\textup{usc}\big[\sup_{\tau} (\psi_\tau + t\tau)\big] = \sup_{\tau} (\psi_\tau + t\tau).$$ 
\end{proposition}
\begin{proof} We denote $u(s,z) = \sup_{\tau} (\psi_\tau(z) +  \textup{Re }s \tau)$ for $(s,z) \in S \times X$ where $S = \{ \textup{Re }s \geq 0\} \subset \Bbb C$. Clearly, $\textup{usc} \ u \in \textup{PSH}(S\times X,\pi^*\o)$. It will be enough to show that  $\textup{usc}\ u=u$. 

We introduce $E = \{ u < \textup{usc} \ u\} \subset S \times X$. As both $u$ and $\textup{usc }u$ are $\Bbb R$-invariant, it follows that $E$ is also $\Bbb R$-invariant, i.e. there exists $B \subset [0,\infty) \times X$ such that 
$$E = B +i\Bbb R.$$
As $E$ has capacity $0$, it follows that $E$ has Lebesgue measure zero. For $z \in X$, we introduce the slices:
$$B_z = B \cap [0,\infty) \times \{z \}.$$
We have that $B_z$ has Lebesgue measure $0$ for all $z \in X \setminus F$, where $F \subset X$ is some set of Lebesgue measure $0$. 

Let $z \in X \setminus F$. We argue that $B_z$ is in fact empty. Both maps $t \to u(t,z)$ and $t \to (\textup{usc } u)(t,z)$ are convex on $[0,\infty)$. As they agree on the dense set $[0,\infty) \setminus B_z$, it follows that they have to be the same, hence $B_z=0$.

For fixed $\tau \in \Bbb R$ we clearly have
$$\psi_\tau = \inf_{ t\geq 0} [u_t - \tau t] \leq \chi_\tau :=\inf_{ t\geq 0} [{(\textup{usc } u)}_t - \tau t].$$
Because each $B_z$ is empty for $z \in X \setminus F$, it follows that $\psi_\tau = \chi_\tau$ outside the set of measure zero $F$. Since both $\psi_\tau$ and $\chi_\tau$ are $\omega$-psh (the former by definition, the latter by Kiselman's minimum principle) it follows that $\psi_\tau=\chi_\tau$. Applying the Legendre transform to the curves $\tau \to \psi_\tau$ and $\tau \to \chi_\tau$, we obtain that $u = \textup{usc }u$.
\end{proof}

%On top of simplifying \eqref{BWNgeodDef}, the above result also shows that there is a one-to-one correspondence between test curves $\tau \to \psi_\tau$ and subgeodesic rays $t \to v_t$ given Legendre transformations:
%\begin{flalign}\label{testcurvesubgeodesic}
%v_t = \sup_{\tau \in \Bbb R} (\psi_\tau + t \tau),\\
%\psi_\tau = \inf_{t \geq 0} (v_t - t \tau).\nonumber
%\end{flalign}
%Later we will argue that the above two maps also give a one-to-one correspondence between maximal test curves and geodesic rays (Corollary \eqref{RWN_main_inverse}). 
Finally, we state one of the main results of \cite{rwn1}:
\begin{theorem}\textup{\cite[Theorem 1.1]{rwn1}} \label{RWN_main} The curve $[0,+\infty) \ni t \to \phi_t \in \textup{PSH}(X,\omega)$ introduced in \eqref{BWNgeodDef} is a weak geodesic ray emanating from $\phi$.
 \end{theorem}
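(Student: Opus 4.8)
The plan is to track the two operations in the construction: the maximization $\psi_\tau\mapsto\tilde\psi_\tau=P_{[\psi_\tau]}(\phi)$ and the inverse Legendre transform $\tilde\psi_\tau\mapsto\phi_t=\textup{usc}\big(\sup_\tau(\tilde\psi_\tau+t\tau)\big)$. I would establish, in order: (a) $\{\tilde\psi_\tau\}$ is again a test curve, with $\tilde\psi_\tau\le\phi$ for every $\tau$ and $\tilde\psi_\tau=\phi$ for $\tau\ll0$; (b) the complexification $u(s,x)=\phi_{\textup{Re}\,s}(x)$ is a locally bounded $\omega$-psh function on $S_{0,+\infty}\times X$, so $t\to\phi_t$ is a weak subgeodesic ray, and $\phi_t\to\phi$ uniformly as $t\to0^+$; (c) $t\to\phi_t$ is in fact a weak geodesic ray. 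Step (c) is the heart of the matter. For (a): the operator $b\mapsto P(b,\phi)$ is order preserving and, if $\tau=\lambda\tau_1+(1-\lambda)\tau_2$, then $\psi_\tau\ge\lambda\psi_{\tau_1}+(1-\lambda)\psi_{\tau_2}$ by concavity of the test curve, while $\lambda P(\psi_{\tau_1}+C,\phi)+(1-\lambda)P(\psi_{\tau_2}+C,\phi)$ is an $\omega_0$-psh competitor in the envelope defining $P(\lambda\psi_{\tau_1}+(1-\lambda)\psi_{\tau_2}+C,\phi)$; letting $C\to+\infty$ and taking usc regularizations gives concavity of $\tau\to\tilde\psi_\tau$. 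Since $\tilde\psi_\tau\le\psi_\tau+C$ for a suitable $C$, the property ``small unbounded locus, or $\equiv-\infty$'' is inherited from $\psi_\tau$; below the take-off level $\psi_\tau=\psi_{-\infty}$ is bounded, so $\psi_\tau+C\ge\phi$ and $\tilde\psi_\tau=\phi$ for $C$ large, while $\psi_\tau\equiv-\infty$ forces $\tilde\psi_\tau\equiv-\infty$.

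For (b): each $\tilde\psi_\tau(x)+\tau\,\textup{Re}\,s$ is $\omega$-psh on $S_{0,+\infty}\times X$ (an $\omega_0$-psh function of $x$ plus a pluriharmonic one) and invariant under translations of $\textup{Im}\,s$, hence so is the usc regularization $u$ of the supremum over $\tau$, once one knows that this supremum is locally bounded above. The latter is exactly the estimate used in Berndtsson's construction recalled in Section~2.1: since $\tilde\psi_\tau\le\sup_X\phi$ and $\tilde\psi_\tau\equiv-\infty$ for $\tau$ large, on $\{0\le t\le T\}$ one has $\phi_t\le\sup_X\phi+TC_0$; since $\tilde\psi_\tau=\phi$ for $\tau\ll0$, one has $\phi_t\ge\phi-C_1t$. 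The same inequalities give $\phi_0=\phi$ (as $\phi$ is continuous) and $\phi_t\to\phi$ uniformly as $t\to0^+$. Being the Legendre transform in $t$ of the concave family $\{\tilde\psi_\tau\}$, $t\to\phi_t$ is convex, so $u\in PSH(S_{0,+\infty}\times X,\omega)\cap L^\infty_{\textup{loc}}$ and $t\to\phi_t$ is a weak subgeodesic ray emanating from $\phi$.

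For (c) I would invoke Theorem~\ref{AM_geod}: as $\phi_t$ is bounded for every $t>0$, the subgeodesic is a geodesic on $(0,+\infty)$ as soon as $t\mapsto AM(\phi_t)$ is affine (it is automatically convex). By the first-variation formula (cf.\ \cite{bbgz}) its right derivative is $\int_X\dot\phi^+_t\,(\omega_0+i\partial\bar\partial\phi_t)^n$, where $\dot\phi^+_t(x)=\max\{\tau:\tilde\psi_\tau(x)+t\tau=\phi_t(x)\}$ is the largest exponent realizing the Legendre transform at $(t,x)$. On $\{\dot\phi^+_t=\tau_0\}$ the function $\phi_t$ coincides with the \emph{linear} weak geodesic $x\mapsto\tilde\psi_{\tau_0}(x)+\tau_0t$, whose complexification $g(s,x)=\tilde\psi_{\tau_0}(x)+\tau_0\textup{Re}\,s$ satisfies $(\omega+i\partial\bar\partial g)^{n+1}=0$ because $\omega+i\partial\bar\partial g$ is pulled back from the $n$-dimensional $X$; by locality of the \MA operator in the plurifine topology (formula~(2) of \cite{gz}), $(\omega_0+i\partial\bar\partial\phi_t)^n$ agrees there with $(\omega_0+i\partial\bar\partial\tilde\psi_{\tau_0})^n$. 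The point is then that the pushforward $(\dot\phi^+_t)_*\big[(\omega_0+i\partial\bar\partial\phi_t)^n\big]$ is a measure on $\mathbb R$ independent of $t$ — the ``Duistermaat--Heckman measure'' of the pair $(\{\tilde\psi_\tau\},\phi)$ — so that $AM(\phi_t)$ is affine and, by Theorem~\ref{AM_geod}, $t\to\phi_t$ is a weak geodesic ray. This invariance is precisely where the maximality $\tilde\psi_\tau=P_{[\psi_\tau]}(\phi)$ is essential: it identifies $\{\dot\phi^+_t\ge\tau_0\}$, up to a plurifine-negligible set, with the fixed contact set $\{\tilde\psi_{\tau_0}=\phi\}$, so that a single disintegration of the \MA mass along $\tau$ works simultaneously for all $t$.

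I expect the disintegration step in (c) to be the main obstacle, for two reasons. First, $\{\dot\phi^+_t=\tau_0\}$ need not be plurifine open, so plurifine locality has to be applied on the genuinely plurifine-open sets $\{\phi_t>\tilde\psi_{\tau_0}+\tau_0t-\eps\}$, with an $\eps\to0$ passage controlled by the comparison and domination principles for $\mathcal E(X,\omega_0)$ (Propositions~\ref{domination} and \ref{energy_dom}). Second, the $t$-independence of the pushforward needs a Fubini-type computation in the $(t,\tau)$ variables that genuinely uses the envelope structure of $\tilde\psi_\tau$; indeed the Legendre transform of a non-maximal concave family is in general only a subgeodesic, so maximality cannot be dispensed with. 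As an alternative to (c), closer in spirit to Section~2.1, one can try to realize $u$ as a Perron envelope on $S_{0,+\infty}\times X$ — the largest $\omega$-psh, $\textup{Im}\,s$-invariant competitor $w$ with $\limsup_{t\to0}w_t\le\phi$ and inverse Legendre transform $\le\tilde\psi_\tau$ for all $\tau$ — and deduce $(\omega+i\partial\bar\partial u)^{n+1}=0$ from Bedford--Taylor theory as in Berndtsson's solution of \eqref{bvp_Bern}, the required stability of this family under local balayage being once more a manifestation of maximality; this mirrors the original argument of \cite{rwn1}.
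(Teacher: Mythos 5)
Note first that the paper itself offers no proof of this statement: it is quoted verbatim from \cite[Theorem 1.1]{rwn1}, so your proposal can only be compared with Ross--Witt-Nystr\"om's original argument, which verifies the homogeneous Monge--Amp\`ere equation for the complexification directly, using the maximality of each $\tilde\psi_\tau$ (Proposition \ref{rwn_lemma1}) and working on the complement of the small unbounded locus. Your alternative route --- establish that $t\to\phi_t$ is a locally bounded subgeodesic emanating from $\phi$ and then show $t\mapsto AM(\phi_t)$ is affine, concluding by Theorem \ref{AM_geod} --- is a legitimate strategy, and steps (a) and (b) are essentially fine. The problem is that step (c), which you yourself identify as ``the heart of the matter'', is not proved but only described: the $t$-independence of the pushforward $(\dot\phi^+_t)_*\bigl[(\omega_0+i\partial\bar\partial\phi_t)^n\bigr]$ is not a citable auxiliary fact, it is essentially equivalent to the theorem, and your sketch of it does not go through as stated. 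The set $\{\dot\phi^+_t=\tau_0\}$ is not plurifine open, the identification of $\{\dot\phi^+_t\ge\tau_0\}$ with the contact set $\{\tilde\psi_{\tau_0}=\phi\}$ up to Monge--Amp\`ere-negligible sets is exactly what has to be established (and needs Proposition \ref{rwn_lemma1} together with plurifine locality on the genuinely open sets $\{\phi_t>\tilde\psi_{\tau_0}+\tau_0 t-\varepsilon\}$, a comparison/domination passage $\varepsilon\to0$, and a Fubini-type argument in $\tau$), and even the first-variation formula $\frac{d^+}{dt}AM(\phi_t)=\int_X\dot\phi^+_t\,(\omega_0+i\partial\bar\partial\phi_t)^n$ for a family that is merely Lipschitz in $t$ requires justification. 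As written, the proposal restates the difficulty rather than resolving it; the same applies to the alternative Perron-envelope suggestion at the end, whose ``stability under balayage'' is again the unproven crux.

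There is also a concrete error in step (a): you invoke the inequality $\tilde\psi_\tau\le\psi_\tau+C$ ``for a suitable $C$'' to transfer the small-unbounded-locus property. This asserts that $P_{[\psi_\tau]}(\phi)$ has the same singularity type as $\psi_\tau$, which is false in general --- refuting exactly this is one of the main points of the present paper (if $\psi_\tau\in\mathcal E(X,\omega_0)$ and $\phi$ is continuous, then $P_{[\psi_\tau]}(\phi)=\phi$, cf.\ the characterization theorem in Section 5 and \cite[Remark 4.6]{rwn1}). Fortunately only the opposite, and true, bound is needed: $\psi_\tau-C$ with $C=(\sup_X\psi_\tau-\inf_X\phi)^+$ is a competitor in every envelope $P(\psi_\tau+C',\phi)$, so $\tilde\psi_\tau\ge\psi_\tau-C$, hence the unbounded locus of $\tilde\psi_\tau$ is contained in that of $\psi_\tau$, and $\tilde\psi_\tau\equiv-\infty$ when $\psi_\tau\equiv-\infty$. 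With that correction (a) and (b) stand, but the theorem remains unproved until the disintegration identity, say $\int_{\{\dot\phi^+_t\ge\tau\}}(\omega_0+i\partial\bar\partial\phi_t)^n=\int_X(\omega_0+i\partial\bar\partial\tilde\psi_\tau)^n$ for a.e.\ $\tau$ and all $t>0$, is actually established.
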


As mentioned in the beginning of the section, the original definition of a an analytic test  configuration $\tau \to [\psi_\tau]$ also assumed that each $\psi_\tau$ has small unbounded locus in $X$. This condition is superfluous and the above theorem holds in this greater generality. At the recommendation of the referee, we give here a proof provided by Ross and Witt-Nystr\"om:

\begin{proof}[Proof of Theorem \ref{RWN_main}] Let $\tau \to \psi_\tau$ be a test curve as given in Definition \ref{testcurvedef}. Let $h_t$ be the Legendre transform of $\psi_\tau$: 
$$h_t:=\sup_\tau (\psi_\tau + t\tau), \ \  t \geq 0$$ 
i.e. $t \to h_t$ is a bounded subgeodesic ray with $h_0 = \psi_{-\infty}$. %Let $R$ be a constant such that $h_t$ is bounded from above by $\psi_{-\infty}+(R-1)t$. 
For $D>0$, let $\phi^D_t$ denote the supremum of all weak subgeodesic rays bounded from above by $\min( \phi+C_\psi t , h_t+D)$. Clearly, $ t \to \phi_t^D$ is also a bounded subgeodesic ray and we introduce the Legendre transform of $\phi^D_t$:
$$\hat \phi^D_\tau =\inf_{t \geq 0} (\phi^D_t - t\tau).$$
As $\phi^D_t \leq \min( \phi+C_\psi t , h_t+D)$, we obtain that in fact $ \hat \phi^D_\tau \leq \min(\psi_\tau + D, \phi)$ for any $\tau \leq C_\psi$ and $\hat \phi^D_\tau=-\infty$ for $\tau >C_\psi$. Also,  by the Kiselman minimum principle $\inf_t(\phi^D_t-t
\tau)$ is $\omega$-psh, hence
$$\hat \phi^D_\tau = \inf_t(\phi^D_t - t\tau) \leq P(\psi_\tau + D, \phi).$$ 
Applying the inverse Legendre transform to the above inequality we obtain that 
\begin{equation}\label{phiDformula}
\phi^D_t=\sup_\tau (P(\psi_\tau + D,\phi)+ t \lambda),
\end{equation}
since $\tau \to P(\psi_\tau +D,\phi)$ is a test curve and the right hand side is subgeodesic ray that is a candidate in the definition of $\phi^D$. 
Let $\phi_t$ denote the limit of $\phi^D_t$: 
$$\phi_t =\textup{usc}\big( \lim_{D \to +\infty}\phi^D_t \big) \in \textup{PSH}(X, \omega).$$
As we will see by the end of the proof, $t \to \phi_t$ introduced this way is the same curve as the one in \eqref{BWNgeodDef}. Using the comparison principle for geodesics, one can show that $t \to \phi_t$ is a weak geodesic ray. Taking the limit in \eqref{phiDformula} we find
$$\lim_{D \to +\infty}\phi^D_t=\sup_\tau (\lim_{D \to +\infty}P(\psi_\tau + D,\phi)+ t \tau).$$

As $P(\psi_\lambda + C,\phi) \leq P_{[\psi_\lambda]}(\phi)$ we have  $\phi^D_t \leq \sup_\tau(P_{[\psi_\lambda]}(\phi)+t \lambda)$. Since $\phi^D_t$ inceases a.e. to $\phi_t$ we get that 
\begin{equation}\label{phitineq1}
\phi_t\leq \sup_\tau(P_{[\psi_\lambda]}(\phi)+t \lambda).
\end{equation}
To argue the other direction, note that since $\phi^D_t \leq \phi_t$ and \eqref{phiDformula} we have
$$\inf_{t \geq 0} (\phi_t - t\tau) \geq P(\psi_\tau +D,\phi).$$
The left hand side is $\omega$-psh by Kiselman's minimum principle and since $P(\psi_\tau+D,\phi)$ increases a.e. to $P_{[\psi_\tau]}(\phi)$, we obtain that $\inf_t (\phi_t - t\tau) \geq P_{[\psi_\tau]}(\phi).$ Taking the Legendre transform of this we obtain that
\begin{equation}\label{phitineq2}
\phi_t\geq \sup_\tau(P_{[\psi_\tau]}(\phi)+t \tau).
\end{equation}
The inequalities \eqref{phitineq1} and \eqref{phitineq2} together imply that $t \to \sup_\tau(P_{[\psi_\tau]}(\phi)+t \tau)$ is indeed a geodesic ray, finishing the proof.
\end{proof}

Lastly, we recall another proposition from \cite{rwn1}, one which will be very useful for us later:

\begin{proposition}\textup{\cite[Theorem 4.10]{rwn1}} Suppose $\phi, \psi \in \textup{PSH}(X,\omega)$ \label{rwn_lemma1}with $\phi$ continuous and $\psi$ possibly unbounded. Then $P_{[\psi]}(\phi)$ is maximal with respect to $\phi$, i.e. $P_{[\psi]}(\phi) = \phi$ a.e. with respect to the measure $(\omega + i \partial\bar \partial P_{[\psi]}(\phi))^n$, where $(\omega + i \partial\bar \partial P_{[\psi]}(\phi))^n$ is defined as in \eqref{non-plurip}.
\end{proposition}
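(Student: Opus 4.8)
The plan is to reduce the maximality of $P_{[\psi]}(\phi)$ to the maximality of the finite-level envelopes $u_C:=P(\psi+C,\phi)=P(\min\{\psi+C,\phi\})$ and then let $C\to+\infty$. First I would record the elementary facts: since $\min\{\psi+C,\phi\}$ increases with $C$, the family $u_C$ increases, stays below $\phi$, and $\textup{usc}(\lim_C u_C)=P_{[\psi]}(\phi)=:u\in PSH(X,\omega_0)$, with $u_C\le u$ for every $C$. Since $u$ is usc and $\phi$ is continuous the set $U:=\{u<\phi\}$ is open, and it suffices to show that the non-pluripolar measure $(\omega_0+i\partial\bar\partial u)^n$, defined as in (\ref{non-plurip}), puts no mass on $U$.

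Second, I would establish that for each fixed $C$ the measure $(\omega_0+i\partial\bar\partial u_C)^n$ is carried by the contact set $\{u_C=\min\{\psi+C,\phi\}\}$, and more precisely that on the open set $\{u_C<\phi\}$ it is carried by $\{u_C=\psi+C\}$ and is dominated there by $(\omega_0+i\partial\bar\partial\psi)^n$. When $\psi$ is continuous the obstacle $\min\{\psi+C,\phi\}$ is continuous, and this is the classical maximality of an envelope with continuous obstacle (push $u_C$ up on a small ball on which it lies strictly below the obstacle, using solvability of the homogeneous Dirichlet problem and the comparison principle, contradicting the definition of the supremum), combined with the standard comparison of Monge--Amp\`ere masses on a contact set. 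For a general usc $\psi$ I would approximate $\psi$ from above by a decreasing sequence of continuous functions $\psi^{(k)}\searrow\psi$; then $P(\min\{\psi^{(k)}+C,\phi\})\searrow u_C$ (the decreasing limit is $\omega_0$-psh, lies below $\min\{\psi+C,\phi\}$, hence equals $u_C$), and the support and domination statements pass to the limit by Bedford--Taylor convergence. Since $u_C\le\phi$ forces $\{u_C=\psi+C\}\subseteq\{\psi+C\le\phi\}\subseteq\{\psi\le\sup_X\phi-C\}$, this gives
\[
\chi_{\{u_C<\phi\}}\,(\omega_0+i\partial\bar\partial u_C)^n\ \le\ \chi_{\{\psi\le\sup_X\phi-C\}}\,(\omega_0+i\partial\bar\partial\psi)^n .
\]

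Third, I would let $C\to+\infty$. Put $\mu_C:=(\omega_0+i\partial\bar\partial u_C)^n$ and $\mu:=(\omega_0+i\partial\bar\partial u)^n$. On $U$ one has $u_C\le u<\phi$, so the displayed inequality yields $\mu_C(U)\le(\omega_0+i\partial\bar\partial\psi)^n(\{\psi\le\sup_X\phi-C\})$. As $C\to+\infty$ the sets $\{\psi\le\sup_X\phi-C\}$ decrease to the pluripolar set $\{\psi=-\infty\}$, and the non-pluripolar measure $(\omega_0+i\partial\bar\partial\psi)^n$ charges no pluripolar set, so $\mu_C(U)\to0$. On the other hand, by the continuity of the non-pluripolar Monge--Amp\`ere operator along the increasing sequence $u_C\nearrow u$ (\cite{gz},\cite{begz}), $\mu_C\to\mu$ weakly, and lower semicontinuity of mass on open sets gives $\mu(U)\le\liminf_C\mu_C(U)=0$. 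Hence $u=\phi$ a.e.\ with respect to $(\omega_0+i\partial\bar\partial u)^n$, as claimed.

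The main obstacle is the second step: upgrading the maximality of $u_C$ from a continuous obstacle to the merely upper semicontinuous obstacle $\min\{\psi+C,\phi\}$. The naive ``push up on a ball'' argument fails because the obstacle need not be lower semicontinuous, so one has to route the argument through the continuous approximation $\psi^{(k)}\searrow\psi$ and verify that the support and domination assertions survive a possibly unbounded decreasing limit; here one uses the locality of the Monge--Amp\`ere operator in the plurifine topology and the uniform mass bound $\int_X(\omega_0+i\partial\bar\partial u_C)^n\le\textup{Vol}(X)$. Granting this and the monotone continuity of the non-pluripolar Monge--Amp\`ere operator invoked in the third step, the remainder is bookkeeping.
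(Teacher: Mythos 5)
First, a remark on the comparison you were asked for: the paper does not prove this proposition at all --- it is quoted from Ross--Witt-Nystr\"om \cite[Theorem 4.10]{rwn1} and used as a black box --- so what you are really attempting is a self-contained proof of that cited theorem. Your skeleton (a finite-level domination for $u_C=P(\psi+C,\phi)$, the observation that contact with $\psi+C$ forces $\psi\le\sup_X\phi-C$, the limit $C\to+\infty$ using that the non-pluripolar measure of $\psi$ charges no pluripolar set, and the transfer to $u=P_{[\psi]}(\phi)$ via monotone continuity of non-pluripolar products plus lower semicontinuity on the open set $\{u<\phi\}$) is the standard modern route, and your first and third steps are sound, provided you cite the monotone convergence property of non-pluripolar products along increasing sequences from \cite{begz} rather than \cite{gz} (the latter only covers the class $\mathcal E(X,\omega_0)$, and neither $u_C$ nor $u$ is assumed to lie there).

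The genuine gap is your second step, which is precisely where the content of \cite[Theorem 4.10]{rwn1} lies, and the mechanism you propose does not deliver it. The continuous approximants $\psi^{(k)}\searrow\psi$ are not $\omega_0$-psh, so at level $k$ there is no positive measure $(\omega_0+i\partial\bar\partial\psi^{(k)})^n$ and hence no domination statement to pass to the limit: the ball push-up argument for a continuous obstacle yields only the support statement that $(\omega_0+i\partial\bar\partial P(\min\{\psi^{(k)}+C,\phi\}))^n$ vanishes where the envelope is strictly below the obstacle. That support statement does not survive the limit in $k$ ``by Bedford--Taylor convergence'': the decreasing limit $u_C$ is unbounded on $\{\psi=-\infty\}$, so Bedford--Taylor convergence of the measures is not available (mass can leak onto the pluripolar locus and the weak limit need not be the non-pluripolar measure of $u_C$), and the contact sets are neither closed nor monotone in $k$, so ``carried by the contact set'' is not preserved under weak limits. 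What your displayed inequality really is, is the rooftop-envelope estimate
$$(\omega_0+i\partial\bar\partial P(f,g))^n\ \le\ \chi_{\{P(f,g)=f\}}(\omega_0+i\partial\bar\partial f)^n+\chi_{\{P(f,g)=g\}}(\omega_0+i\partial\bar\partial g)^n$$
applied with $f=\phi$, $g=\psi+C$ and non-pluripolar measures; for an unbounded obstacle this is a nontrivial theorem of essentially the same depth as the proposition you are proving (it is in substance what \cite{rwn1} establish, and also the precise form and direction of any ``standard comparison of Monge--Amp\`ere masses on a contact set'' needs care even for bounded potentials). As written, this crux is asserted rather than proved, and flagged as ``bookkeeping''; to close the argument you must either prove this estimate for obstacles of the form $\min\{\psi+C,\phi\}$ with $\psi$ merely $\omega_0$-psh and unbounded, or cite it --- at which point the proof is no longer independent of the source the paper already invokes.
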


\section{Normalization of weak geodesics}

Given a weak subgeodesic segment $(\alpha,\beta)\ni t \to u_t \in \textup{PSH}(X,\omega)$, since the correspondence $t \to u_t(x)$ is convex for any $x \in X$, the left and right $t$-derivatives $\dot u^+_t(x)$ and $\dot u^-_t(x)$ exist at each point $t \in (\alpha,\beta).$ One can also define $\dot u^+_\alpha= \lim_{t \searrow \alpha}u^+_t$ and $\dot u^-_\beta = \lim_{t \nearrow \beta}u^-_t$, with values possibly equal to $\pm\infty$. Our first lemma is a precise statement about these functions. Given $u_1,u_2 \in \textup{PSH}(X,\omega) \cap L^\infty(X)$, we denote by $u(u_0,u_1)$ the weak geodesic segment joining $u_0$ with $u_1$ constructed by the method of Section 2.1.
\begin{lemma} \label{lemma1}Suppose $u_0,u_1 \in \textup{PSH}(X,\omega) \cap L^\infty(X)$. For $v = u(u_0,u_1)$ we have:
$$\inf_{X}{\dot v}^+_0 = \inf_{X} (u_1 - u_0).$$
$$\sup_{X}{\dot v}^-_1 = \sup_{X} (u_1 - u_0).$$
\end{lemma}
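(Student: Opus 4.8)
The plan is to prove the two identities separately; by symmetry (replacing $t\mapsto 1-t$, which swaps the roles of $u_0$ and $u_1$ and sends $\dot v_0^+$ to $-\dot v_1^-$) it suffices to establish the first one, $\inf_X \dot v_0^+ = \inf_X(u_1-u_0)$. Write $m=\inf_X(u_1-u_0)$, which is finite since $u_0,u_1$ are bounded. One inequality is easy: from the Berndtsson estimate (\ref{Bern_est}) we have $v_t\le u_0 + t(u_1-u_0)$, so $(v_t-v_0)/t\le u_1-u_0$ for $t>0$ (using $v_0=u_0$, which holds because the boundary values are attained uniformly), and letting $t\searrow 0$ gives $\dot v_0^+\le u_1-u_0$ pointwise, hence $\inf_X\dot v_0^+\le m$.

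For the reverse inequality the idea is to use the explicit competitor already appearing in Section 2.1. Since $v=\sup_{w\in\mathcal S}w$ and $\mathcal S$ contains, for $A$ large, the function $w_t=\max\{u_0-At,\,u_1-A(1-t)\}$, a sharper choice is available near $t=0$: consider $w_t = \max\{u_0 + mt,\; u_1 - A(1-t)\}$ for $A>0$ large. I would check that this $w$ is a genuine weak subgeodesic (it is a max of two subgeodesics, since $t\mapsto u_0+mt$ is affine hence a subgeodesic, and $t\mapsto u_1-A(1-t)$ is affine as well), that its boundary values satisfy $\lim_{t\to 0}w_t\le u_0$ (here $u_0+m\cdot 0=u_0$, and $u_1-A\le u_0$ for $A$ large) and $\lim_{t\to1}w_t\le u_1$ (here we need $u_0+m\le u_1$, which is exactly $m\le u_1-u_0$, true by definition of $m$). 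Thus $w\in\mathcal S$, so $v\ge w$. Near $t=0$, for each fixed $x$ the first branch $u_0(x)+mt$ dominates the second once $t$ is small enough (since at $t=0$ we have $u_0(x)>u_1(x)-A$ strictly for $A$ large, uniformly in $x$), so $w_t(x)=u_0(x)+mt$ for all small $t$, giving $\dot w_0^+(x)=m$. Combined with $v\ge w$, $v_0=u_0=w_0$, and convexity of $t\mapsto v_t(x)$, this yields $\dot v_0^+(x)\ge \dot w_0^+(x)=m$ for every $x$, hence $\inf_X\dot v_0^+\ge m$.

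Putting the two inequalities together gives $\inf_X\dot v_0^+=m=\inf_X(u_1-u_0)$, and the mirrored argument gives the statement about $\sup_X\dot v_1^-$.

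The main obstacle I anticipate is the interchange of limits in $\inf_X\dot v_0^+\le m$: one must make sure $\dot v_0^+$ is genuinely the pointwise right derivative at $0$ and that the uniform attainment of the boundary data (guaranteed by the construction in Section 2.1, via the maximum principle \cite[Theorem 6.4]{b1}) legitimately gives $v_0=u_0$ as a pointwise, not merely almost-everywhere, identity — this is what makes the difference quotients $(v_t-v_0)/t$ meaningful at every $x$. A secondary technical point is confirming that the sharp competitor $w$ with slope $m$ really lies in $\mathcal S$ and that its lower branch is genuinely dominated near $t=0$ uniformly in $x$; this uses only boundedness of $u_0,u_1$ and the definition of $m$, so it should be routine, but it is where the value $m$ (rather than a generic slope) enters essentially.
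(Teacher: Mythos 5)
Your proof is correct and follows essentially the same route as the paper: the lower bound comes from the linear competitor of slope $m=\inf_X(u_1-u_0)$ in the envelope defining $v$ (the extra branch $u_1-A(1-t)$ in your max is harmless but unnecessary, since $t\mapsto u_0+mt$ alone lies in $\mathcal S$), and the upper bound is the same convexity fact, phrased via the chord estimate (\ref{Bern_est}) instead of the paper's identity $u_1-u_0=\int_0^1\dot v_t\,dt\geq \dot v^+_0$. The symmetry reduction via $u(u_0,u_1)_t=u(u_1,u_0)_{1-t}$ is also exactly the paper's argument.
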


\begin{proof} We only prove the first identity, as the second follows trivially from $u(u_0,u_1)_t = u(u_1,u_0)_{1-t}$.

It follows from the construction in Section 2.1 that $v_t \geq u_0 + \inf_{x \in X} (u_1 - u_0) t, \ t \in[0,1]$. This implies that $\inf_{x \in X}{\dot v}^+_0 \geq \inf_{x \in X} (u_1 - u_0).$ The other direction is easily seen using the uniform Lipschitz continuity and convexity in the $t$ variable:
$$u_1(x) - u_0(x)= \int_0^1 \dot v_t(x)dt \geq \dot v^+_0(x), \ x\in X.$$
\end{proof}
For a weak geodesic segment $(\alpha,\beta) \ni t \to u_t \in \textup{PSH}(X,\omega)$ and $ a,b \in (\alpha,\beta)$, we denote by $(0,1) \ni t \to u^{ab}_t \in \textup{PSH}(X,\omega)$ the rescaled weak geodesic segment $u^{ab}_t=u_{a + (b-a)t}$.

\begin{lemma} \label{lemma2}Given a weak geodesic segment $(\alpha,\beta) \ni t \to u_t \in \textup{PSH}(X,\omega)$, if $\alpha < a<b <\beta$ we have that $u^{ab}=u(u_a,u_b)$.
\end{lemma}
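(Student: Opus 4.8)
The plan is to deduce the identity from the uniqueness of the solution of Berndtsson's Dirichlet problem (\ref{bvp_Bern}), recalled in Section 2.1: I will argue that the rescaled segment $u^{ab}$ is itself a solution of that problem with boundary data $u_a$ and $u_b$, so that the maximum principle \cite[Theorem 6.4]{b1} forces $u^{ab}=u(u_a,u_b)$. The first step is a reparametrization remark at the level of complexifications. The complexification of $u^{ab}$ is $u^{ab}(s,x)=u(a+(b-a)s,x)$ on $S_{0,1}\times X$, i.e.\ the composition of (the complexification of) $u$ with the affine biholomorphism $\Phi(s,x)=(a+(b-a)s,x)$ of $S_{0,1}\times X$ onto $S_{ab}\times X$. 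Since $\omega$ is in both cases the pullback of $\omega_0$ under the projection to $X$, one has $\Phi^*\omega=\omega$; being biholomorphic, $\Phi$ commutes with $i\partial\bar\partial$, and the Bedford--Taylor Monge--Amp\`ere operator is natural under it, so $u^{ab}=u\circ\Phi$ is again locally bounded and $\omega$-psh, $(\omega+i\partial\bar\partial u^{ab})^{n+1}=\Phi^*\big((\omega+i\partial\bar\partial u)^{n+1}\big)=0$, and $u^{ab}$ is still independent of $\textup{Im}\,s$. Thus $u^{ab}$ is a weak geodesic segment on $(0,1)$.

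Next I would verify the two hypotheses that allow one to invoke the uniqueness in (\ref{bvp_Bern}), namely $u_a,u_b\in PSH(X,\omega_0)\cap L^\infty(X)$ and the \emph{uniform} convergence $u^{ab}_t\to u_a$ (as $t\to 0$), $u^{ab}_t\to u_b$ (as $t\to 1$). Boundedness of the slices is immediate, since $\{a\}\times X$ and $\{b\}\times X$ are compact subsets of $S_{\alpha\beta}\times X$ and $u$ is locally bounded; that they are $\omega_0$-psh is part of the definition of a weak subgeodesic. For the uniform boundary limits I would pick $\alpha',\beta'$ with $\alpha<\alpha'<a<b<\beta'<\beta$ and put $C=\sup_{[\alpha',\beta']\times X}|u|<\infty$. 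For each fixed $x$ the function $t\mapsto u_t(x)$ is convex on $(\alpha,\beta)$ (a weak geodesic is in particular a weak subgeodesic) and bounded by $C$ on $(\alpha',\beta')$, hence Lipschitz on any compact subinterval of $(\alpha',\beta')$ with constant controlled by $C$ and the distance to $\alpha',\beta'$ only; choosing such a subinterval with $a,b$ in its interior yields $|u_t-u_a|\le L\,|t-a|$ near $a$ and $|u_t-u_b|\le L\,|t-b|$ near $b$ with $L$ independent of $x$, which is exactly uniform attainment of the boundary data for $u^{ab}$.

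Putting these together, $u^{ab}$ solves the Dirichlet problem (\ref{bvp_Bern}) with $u_0=u_a$ and $u_1=u_b$, so by uniqueness $u^{ab}=u(u_a,u_b)$. I expect the only genuinely delicate point to be the passage from pointwise to uniform boundary limits; everything else is the affine invariance of the homogeneous Monge--Amp\`ere operator together with the uniqueness statement already quoted in Section 2.1. The one care point worth flagging is that the convexity estimate must be carried out on a strip strictly larger than $S_{ab}$, so that $a$ and $b$ are interior points and the resulting Lipschitz bound there is genuinely two-sided.
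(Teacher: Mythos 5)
Your proof is correct and follows essentially the same route as the paper: use convexity in $t$ together with local boundedness on a slightly larger subinterval $(\alpha',\beta')$ to get a Lipschitz bound in $t$ uniform in $x$, conclude that the boundary values $u_a,u_b$ are attained uniformly, and then invoke the maximum principle/uniqueness for Berndtsson's Dirichlet problem. The affine-reparametrization remark you spell out is left implicit in the paper but is the same underlying observation.
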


\begin{proof} Choose $c,d$ such that $\alpha < c <a < b < d <\beta$. By convexity, for any $t_1,t_2 \in (a,b)$ we have:
$$\frac{u^{ab}_c-u^{ab}_a}{c-a} \leq  \frac{u^{ab}_{t_1} - u^{ab}_{t_2}}{t_1 - t_2}\leq\frac{u^{ab}_b-u^{ab}_d}{b-d},$$
hence $t \to u^{ab}_t$ is uniformly Lipschitz continuous in the $t$ variable. As it follows from the construction in Section 2.1, this is also true for $u(u_a,u_b)$ as well. Hence, the classical maximum principle can be used to conclude the lemma.
\end{proof}

\begin{lemma} \label{lemma3}Given a weak geodesic segment $ (\alpha,\beta) \ni t \to u_t \in \textup{PSH}(X,\omega)$, for any $\alpha < a<b<c< \beta$ one has:
\renewcommand{\theenumi}{(\roman{enumi})}
\begin{enumerate}
\item[(i)] $\frac{b-a}{c-a}\inf_{X} \frac{u_b - u_a}{b-a} + \frac{c-b}{c-a}\inf_{X} \frac{u_c - u_b}{c-b} \leq \inf_{X} \frac{u_c - u_a}{c-a}$,
\item[(ii)] $\frac{b-a}{c-a}\sup_{X} \frac{u_b - u_a}{b-a} + \frac{c-b}{c-a}\sup_{X} \frac{u_c - u_b}{c-b} \geq \sup_{X} \frac{u_c - u_a}{c-a}$,
\item[(iii)] $\inf_{X} \frac{u_c - u_b}{c-b} \geq \inf_{X} \frac{u_c - u_a}{c-a}$,
\item[(iv)] $\sup_{X} \frac{u_b - u_a}{b-a} \leq \sup_{X} \frac{u_c - u_a}{c-a}$.
\end{enumerate}
\end{lemma}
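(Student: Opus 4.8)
The plan is to reduce all four inequalities to a single fact: for every fixed $x \in X$ the map $t \mapsto u_t(x)$ is a finite convex function on $(\alpha,\beta)$. This is exactly the property recalled at the start of this section, and it holds for any weak subgeodesic; local boundedness of the complexification moreover forces $u_t \in L^\infty(X)$ for each $t\in(\alpha,\beta)$, so all the infima and suprema below are finite real numbers.

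First I would record two elementary facts about a finite convex function $f$ on an interval containing points $a<b<c$. One is the telescoping identity
\[
\frac{f(c)-f(a)}{c-a}=\frac{b-a}{c-a}\cdot\frac{f(b)-f(a)}{b-a}+\frac{c-b}{c-a}\cdot\frac{f(c)-f(b)}{c-b},
\]
whose two coefficients are positive and sum to $1$; the other is the three-slopes inequality
\[
\frac{f(b)-f(a)}{b-a}\ \le\ \frac{f(c)-f(a)}{c-a}\ \le\ \frac{f(c)-f(b)}{c-b},
\]
which is the only place convexity is used. Applying the identity to the convex function $t\mapsto u_t(x)$ and bounding each outer difference quotient from below by its infimum over $X$ gives, for every $x\in X$,
\[
\frac{u_c(x)-u_a(x)}{c-a}\ \ge\ \frac{b-a}{c-a}\inf_X\frac{u_b-u_a}{b-a}+\frac{c-b}{c-a}\inf_X\frac{u_c-u_b}{c-b};
\]
taking the infimum over $x$ yields (i). Bounding the two outer difference quotients from above by their suprema and then passing to the supremum over $x$ yields (ii). For (iii) I would use the right-hand inequality of the three-slopes estimate, which gives for every $x\in X$
\[
\frac{u_c(x)-u_b(x)}{c-b}\ \ge\ \frac{u_c(x)-u_a(x)}{c-a}\ \ge\ \inf_X\frac{u_c-u_a}{c-a},
\]
and an infimum over $x$ finishes it; (iv) follows in the same way from the left-hand inequality after a supremum over $x$.

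There is no real obstacle here beyond bookkeeping: the only things to be careful about are attaching the correct one-sided bound to each item (infimum for (i) and (iii), supremum for (ii) and (iv)), using the correct link of the three-slopes chain for (iii) and (iv), and keeping track of the signs of $b-a$, $c-b$, $c-a$ (all positive) so that multiplication and division preserve the inequalities. One could equally well derive (i) and (ii) from sub- and super-additivity of $\inf_X$ and $\sup_X$ applied to the pointwise identity $u_c-u_a=(u_c-u_b)+(u_b-u_a)$; this makes transparent that (i) and (ii) use no convexity at all, only (iii) and (iv) do.
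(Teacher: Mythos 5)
Your proof is correct and follows the same route as the paper: the paper dismisses (i) and (ii) as trivial consequences of the pointwise decomposition $\frac{u_c-u_a}{c-a}=\frac{b-a}{c-a}\frac{u_b-u_a}{b-a}+\frac{c-b}{c-a}\frac{u_c-u_b}{c-b}$ together with $\inf_X$/$\sup_X$, and derives (iii) and (iv) from convexity of $t\mapsto u_t(x)$, i.e.\ the three-slopes inequality, exactly as you do. Your closing remark that only (iii) and (iv) actually use convexity matches the paper's own phrasing.
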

\begin{proof} The first two estimates are trivial, whereas the last two follow from the convexity of the maps $t \to u_t(x)$.
\end{proof}
\begin{theorem}  \label{norm_thm} Given a weak geodesic $(\alpha,\beta) \ni t  \to u_t \in \textup{PSH}(X,\omega),$ $ (\alpha,\beta \in \Bbb R \cup \{-\infty,+\infty \}),$ for any $a,b,c,d \in (\alpha,\beta)$ one has
\begin{enumerate}
\item[(i)] $\inf_{X}\frac{u_a-u_b}{a-b}=\inf_{ X}\frac{u_c-u_d}{c-d} = m_u$.
\item[(ii)] $\sup_{X}\frac{u_a-u_b}{a-b}=\sup_{ X}\frac{u_c-u_d}{c-d} = M_u$,
\end{enumerate}
Hence, $t \to u_t$ is Lipschitz continuous in $t$, with Lipschitz constant $\max\{|M_u|,|m_u|\}$.
\end{theorem}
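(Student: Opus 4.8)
The plan is to reduce everything to showing that the two single--parameter functions $a\mapsto\inf_X\dot u^+_a$ and $b\mapsto\sup_X\dot u^-_b$ are constant on $(\alpha,\beta)$; calling the constants $m_u$ and $M_u$, the assertions (i), (ii) and the Lipschitz bound will then follow at once. First note these quantities are finite: for $t\in(\alpha,\beta)$ the slice $\{t\}\times X$ is a compact subset of $S_{\alpha\beta}\times X$, so $u_t$ is bounded on $X$; picking $[a-\delta,a+\delta]\subset(\alpha,\beta)$ and combining convexity of $t\mapsto u_t(x)$ with the bound on $u$ over $[a-\delta,a+\delta]\times X$ bounds $\dot u^\pm_a$ uniformly on $X$.

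The crucial step expresses the pair--dependent quantity $\inf_X\frac{u_b-u_a}{b-a}$ through $\inf_X\dot u^+_a$. Fix $\alpha<a<b<\beta$ and consider the rescaled segment $u^{ab}_t=u_{a+(b-a)t}$. By Lemma \ref{lemma2} we have $u^{ab}=u(u_a,u_b)$, so Lemma \ref{lemma1} applies and gives $\inf_X(\dot u^{ab})^+_0=\inf_X(u_b-u_a)$ and $\sup_X(\dot u^{ab})^-_1=\sup_X(u_b-u_a)$. Since $b-a>0$, the chain rule for one--sided derivatives of the convex function $t\mapsto u^{ab}_t(x)$ gives $(\dot u^{ab})^+_0=(b-a)\dot u^+_a$ and $(\dot u^{ab})^-_1=(b-a)\dot u^-_b$, so dividing by $b-a$ yields
$$\inf_X\dot u^+_a=\inf_X\frac{u_b-u_a}{b-a},\qquad \sup_X\dot u^-_b=\sup_X\frac{u_b-u_a}{b-a}\qquad(a<b).$$
In particular the first expression is independent of $b$ and the second is independent of $a$.

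Now fix $\alpha<a<b<c<\beta$. Using the displayed identities, Lemma \ref{lemma3}(iii) reads $\inf_X\dot u^+_b\ge\inf_X\dot u^+_a$, while Lemma \ref{lemma3}(i) --- after substituting $\inf_X\frac{u_b-u_a}{b-a}=\inf_X\frac{u_c-u_a}{c-a}=\inf_X\dot u^+_a$ and $\inf_X\frac{u_c-u_b}{c-b}=\inf_X\dot u^+_b$ and simplifying --- reads $\inf_X\dot u^+_b\le\inf_X\dot u^+_a$. Hence $a\mapsto\inf_X\dot u^+_a$ is constant, say $=m_u$. Symmetrically, Lemma \ref{lemma3}(iv) gives $\sup_X\dot u^-_b\le\sup_X\dot u^-_c$ and Lemma \ref{lemma3}(ii) gives $\sup_X\dot u^-_b\ge\sup_X\dot u^-_c$, so $b\mapsto\sup_X\dot u^-_b$ is constant, say $=M_u$. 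Combining this with the displayed identities and the symmetry $\frac{u_a-u_b}{a-b}=\frac{u_b-u_a}{b-a}$ gives (i) and (ii). Finally, for $t_0<t_1$ in $(\alpha,\beta)$ and any $x\in X$ we have $m_u\le\frac{u_{t_1}(x)-u_{t_0}(x)}{t_1-t_0}\le M_u$, whence $|u_{t_1}(x)-u_{t_0}(x)|\le\max\{|M_u|,|m_u|\}\,|t_1-t_0|$, which is the Lipschitz claim.

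The only genuine idea is the reduction in the second paragraph: applying Lemma \ref{lemma1} to the endpoints of a rescaled segment (legitimate by Lemma \ref{lemma2}) collapses a quantity depending on two parameters into one depending on a single parameter. After that, the argument is bookkeeping with the elementary inequalities of Lemma \ref{lemma3} and requires no further input from pluripotential theory; the main thing to watch is simply getting the positive factor $b-a$ and the directions of the inequalities right.
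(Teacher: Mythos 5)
Your proposal is correct and follows essentially the same route as the paper: the key identity $\inf_X\dot u^+_a=\inf_X\frac{u_b-u_a}{b-a}$ (and its sup counterpart) obtained from Lemma \ref{lemma1} via Lemma \ref{lemma2} is exactly the paper's starting point, and constancy is then forced by playing Lemma \ref{lemma3}(i) against (iii) (resp.\ (ii) against (iv)), just as in the paper's proof. The only cosmetic difference is that you phrase the conclusion as constancy of $a\mapsto\inf_X\dot u^+_a$ and $b\mapsto\sup_X\dot u^-_b$ and write out the sup case, where the paper manipulates the difference quotients directly and dismisses (ii) as ``similar.''
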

\begin{proof} We only prove (i), as the proof of (ii) is similar. We can suppose that $a < b < c < d$. It follows from Lemma \ref{lemma1} and \ref{lemma2} that
\begin{equation}\label{inf1}\inf_{X} \dot u^+_a=\inf_{X} \frac{u_b - u_a}{b-a} = \inf_{X} \frac{u_c - u_a}{c-a}.
\end{equation}
Putting this into Lemma \ref{lemma3} (i) we obtain
$\inf_{x \in X} \frac{u_c - u_b}{c-b} \leq \inf_{x \in X} \frac{u_c - u_a}{c-a}$. Now, Lemma \ref{lemma3} (iii) implies that
\begin{equation}
\label{inf2}\inf_{X} \frac{u_c - u_b}{c-b} = \inf_{X} \frac{u_c - u_a}{c-a}.
\end{equation}
By (\ref{inf1}) and (\ref{inf2}) we obtain that
$$\inf_{X} \frac{u_b - u_a}{b-a} = \inf_{X} \frac{u_c - u_b}{c-b}.
$$
Changing the letters $(a,b,c)$ to $(b,c,d)$ in the above identity we obtain $\inf_{ X} \frac{u_c - u_b}{c-b} = $ $ \inf_{ X} \frac{u_d - u_c}{d-c}$, proving part (i) of the theorem.
\end{proof}

We make now a slight digression. For a weak geodesic segment, the correspondence $t \to u_t(x), \ x \in X$ is convex, hence the pointwise boundary limits $u_{\alpha}(x) = \lim_{t \to \alpha}u_t(x)$ and $u_{\beta}(x) = \lim_{t \to \beta}u_t(x)$ exist even if they might not be bounded or $\omega$-psh. As a corollary to our previous theorem, we obtain that if $\alpha$ or $\beta$ is finite, the boundary potentials $u_{\alpha}$ and $u_{\beta}$ are bounded $\omega$-psh functions and the corresponding limits $\lim_{t\searrow \alpha} u_t =u_\alpha$ and $ \lim_{t\nearrow \beta} u_t =u_\beta$ are uniform in $X$. In essence this proves the following existence and uniqueness theorem, which is only a slight generalization of a result of Berndtsson \cite{br1}.
\begin{corollary} Given bounded $u_0,u_1 \in \textup{PSH}(X,\omega)$, there exists a unique solution to the following Dirichlet problem for locally bounded $\omega-$psh functions $u : S_{01}\times X \to \Bbb R$:
\begin{alignat}{2}\label{bvp}
  &(\pi^* \omega + i \partial \overline{\partial}u)^{n+1}=0 \nonumber\\
  &u(t+ir,x) =u(t,x) \  x \in X, t \in (0,1), r \in \Bbb R \\
  &\lim_{t \to 0}u(t,x)=u_{0}(x) \textup{ and }\lim_{t \to 1}u(t,x)=u_{1}(x),  \ x \in X\nonumber,
\end{alignat}
where the boundary limits are assumed to be only pointwise in $X$.
\end{corollary}

\begin{proof} By the method of Section 2.1, there exists a solution $u$ that is bounded and assumes the boundary values uniformly, not just pointwise. By our discussion above, any other locally bounded solution $v$ is infact globally bounded and assumes the boundary values uniformly as well. Hence by an application of the standard maximum principle adapted to this setting \cite[Theorem 6.4]{b1}, the solution provided by Berndtsson's method is unique.
\end{proof}

We note here that the uniqueness part of the above theorem does not seem to follow from an application of the classical maximum principle alone (without Theorem \ref{norm_thm}), as this result does not work with pointwise boundary limits. Our observation, that in this problem, pointwise boundary limits are infact uniform  seems to be essential.

\section{A construction of weak geodesic rays}

Before we prove our main theorem about constructing weak geodesic rays, let us recall some notation introduced in the beginning. For $\phi,\psi \in \textup{PSH}(X,\omega)$, $\psi \leq \phi$ with $\phi$ bounded and $\psi$ possibly unbounded we define the following set of weak geodesic rays:
$$\mathcal R(\phi,\psi) = \{ v \textup{ is a normalized weak geodesic ray with } \lim_{t \to 0}v_t= \phi \textup{ and } \lim_{t \to \infty}v_t\geq \psi \}.$$

The set $\mathcal R(\phi,\psi)$ is always non-empty as it contains the constant ray $u = \phi$. By $(0,l) \ni t  \to u^l_t \in  \textup{PSH}(X,\omega)$ we denote the unique weak geodesic segments joining $\phi$ with $\max\{\phi-l,\psi\}, \ l >0$.
\begin{theorem} \label{ray_const}For any $\phi,\psi \in \textup{PSH}(\omega)$ with $\phi$ bounded and $\psi \leq \phi$, the weak geodesic segments $u^l$ form an increasing family. The upper semicontinuous regularization of their limit $v(\phi,\psi) = \textup{usc}(\lim_{l \to \infty} u^l)$ is a weak geodesic ray for which the following hold:
\renewcommand{\theenumi}{(\roman{enumi})}
\begin{enumerate}
\item[(i)] $v(\phi,\psi)_t = \textup{usc}(\lim_{l \to \infty} u^l_t)$ for any $t \in (0,+\infty)$.

\item[(ii)] $v(\phi,\psi)  \in \mathcal R(\phi,\psi)$, more precisely $v(\phi,\psi) = \inf_{v \in \mathcal R(\phi,\psi)}v$. In particular, $t \to v(\phi,\psi)_t$ is constant if and only if $\mathcal R(\phi,\psi)$ contains only the constant ray $\phi$.

\item[(iii)] $AM(v(\phi,\psi)_t)=AM(\phi)+c_{\psi}t$, in particular $t \to v(\phi,\psi)_t$ is constant if and only if $\psi \in \mathcal E(X,\omega)$.
\end{enumerate}
\end{theorem}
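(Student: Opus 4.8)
\emph{Strategy.} I would establish the monotonicity first, then prove (i), then (iii), and finally (ii), since the normalization of the limit ray in (ii) is most cleanly deduced from the energy identity in (iii) together with the minimality property.

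\emph{Monotonicity and the candidate ray.} The key preliminary observation is that each $u^l$ is squeezed: the curve $t\mapsto\max\{\phi-t,\psi\}$ complexifies to the $\omega$-psh function $\max\{\phi-\mathrm{Re}\,s,\psi\}$, so it is a weak subgeodesic on $[0,l]$ with the same endpoints $\phi$ and $\max\{\phi-l,\psi\}$ as $u^l$; by maximality $\max\{\phi-t,\psi\}\le u^l_t$, and convexity in $t$ (the linear-interpolation bound from Section~2.1) gives $\phi-t\le u^l_t\le\phi$ on $[0,l]$. Consequently $m_{u^l}=-\tfrac1l\sup_X\min\{l,\phi-\psi\}\in[-1,0]$ and $M_{u^l}=-\tfrac1l\inf_X\min\{l,\phi-\psi\}\in[-1,0]$, so by Theorem~\ref{norm_thm} the whole family $\{u^l\}$ is $1$-Lipschitz in $t$, uniformly in $l$. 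For $l<l'$ the lower squeeze applied to $u^{l'}$ yields $u^{l'}_l\ge\max\{\phi-l,\psi\}=u^l_l$ while $u^{l'}_0=\phi=u^l_0$; since $u^{l'}|_{S_{0,l}\times X}$ is the maximal subgeodesic with its own (larger) boundary data and $u^l$ is a competing subgeodesic, the maximum principle of Section~2.1 gives $u^l\le u^{l'}$ on $S_{0,l}\times X$. Hence $l\mapsto u^l$ is increasing, the pointwise limit $V=\lim_l u^l$ is finite (it lies in $[\phi-t,\phi]$ on each slice), and on every finite strip $S_{0,L}\times X$ it is an increasing limit of uniformly bounded solutions of the homogeneous Monge--Amp\`ere equation, so $v(\phi,\psi)=\mathrm{usc}(V)$ solves that equation there by Bedford--Taylor theory; thus $v(\phi,\psi)$ is a weak geodesic ray.

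\emph{(i) and (iii).} Since $u^l_t\nearrow V_t$ for each fixed $t$ and the family is uniformly $1$-Lipschitz in $t$, the limit $V$ is $1$-Lipschitz in $\mathrm{Re}\,s$ and independent of $\mathrm{Im}\,s$; therefore the product upper semicontinuous regularization of $V$ at a point with $\mathrm{Re}\,s=t$ coincides with the $X$-regularization of the single slice $V_t$, which is precisely $v(\phi,\psi)_t=\mathrm{usc}(\lim_l u^l_t)$. For (iii), each $u^l$ is a geodesic, so $t\mapsto AM(u^l_t)$ is affine by Theorem~\ref{AM_geod} with endpoint values $AM(\phi)$ and $AM(\max\{\phi-l,\psi\})$; by Remark~\ref{c_alt_def} with $\beta=\phi$ the slope $\tfrac1l\big(AM(\max\{\phi-l,\psi\})-AM(\phi)\big)$ tends to $c_\psi$, while $AM(u^l_t)\to AM(v(\phi,\psi)_t)$ by continuity of the (non-pluripolar) energy along the increasing sequence $u^l_t\nearrow V_t$ with bounded limit $v(\phi,\psi)_t$. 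Passing to the limit gives $AM(v(\phi,\psi)_t)=AM(\phi)+c_\psi t$. Since $v(\phi,\psi)_t\le\phi$, Proposition~\ref{energy_dom} shows $v(\phi,\psi)$ is constant iff $AM(v(\phi,\psi)_t)\equiv AM(\phi)$ iff $c_\psi=0$ iff $\psi\in\mathcal E(X,\omega_0)$ (Theorem~\ref{E_energy}).

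\emph{(ii).} For the lower bound fix $w\in\mathcal R(\phi,\psi)$; the constant case is trivial, and otherwise $M_w=0$, $m_w=-1$ force $w$ to be non-increasing with $w_t\ge\phi-t$, so $w_l\ge\max\{\phi-l,\psi\}$ for every $l$, and the maximum principle (as above) gives $u^l\le w|_{S_{0,l}\times X}$; letting $l\to\infty$ and regularizing, $v(\phi,\psi)\le w$, so $v(\phi,\psi)\le\inf_{\mathcal R}$. To finish I must check $v(\phi,\psi)\in\mathcal R(\phi,\psi)$: $v(\phi,\psi)_0=\phi$ and $v(\phi,\psi)_\infty\ge\psi$ follow from the squeeze $\max\{\phi-t,\psi\}\le v(\phi,\psi)_t\le\phi$, and $M_{v(\phi,\psi)}=0$ since $\sup_X\frac{v(\phi,\psi)_t-\phi}{t}\le0$ yet dominates $\tfrac1t\sup_X(\psi-\phi)\to0$ as $t\to\infty$. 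For $m_{v(\phi,\psi)}$ one has $-1\le m_{v(\phi,\psi)}\le0$; if $m_{v(\phi,\psi)}=0$ then $v(\phi,\psi)$ is constant. If $-1<m_{v(\phi,\psi)}=-\mu<0$, I run a self-similarity argument: $\hat v_t:=v(\phi,\psi)_{t/\mu}$ is, as the precomposition of $v(\phi,\psi)$ with the holomorphic map $(s,x)\mapsto(s/\mu,x)$, again a weak geodesic ray with the same endpoints but now $m_{\hat v}=-1$, $M_{\hat v}=0$, so $\hat v\in\mathcal R(\phi,\psi)$; minimality gives $v(\phi,\psi)\le\hat v$, while $t/\mu>t$ and monotonicity give $\hat v\le v(\phi,\psi)$, hence $v(\phi,\psi)_t=v(\phi,\psi)_{t/\mu}=\cdots=v(\phi,\psi)_{t/\mu^k}\to v(\phi,\psi)_\infty$, so $v(\phi,\psi)\equiv\phi$, contradicting $m_{v(\phi,\psi)}=-\mu<0$. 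Thus $v(\phi,\psi)$ is normalized, lies in $\mathcal R(\phi,\psi)$, and equals $\inf_{\mathcal R}$; moreover if $v(\phi,\psi)=\phi$ then every $w\in\mathcal R(\phi,\psi)$ satisfies $\phi=v(\phi,\psi)\le w\le w_0=\phi$, so $\mathcal R(\phi,\psi)=\{\phi\}$.

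\emph{Main obstacle.} The standard analytic inputs (Bedford--Taylor stability under increasing limits on the strip, continuity of $AM$, and the maximum-principle comparisons) are routine by now; the genuine difficulty is the normalization of the limit ray, i.e.\ showing $m_{v(\phi,\psi)}=-1$ in the non-constant case. I expect the self-similarity trick above --- using that $\mathcal R(\phi,\psi)$ is invariant under linear reparametrization of $t$ together with the established minimality of $v(\phi,\psi)$ --- to be the decisive idea; a more computational alternative would be to prove directly that $v(\phi,\psi)_t=\phi-t$ on a full neighborhood of $\psi^{-1}(-\infty)$, but that appears to require additional barrier constructions.
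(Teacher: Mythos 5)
Your proposal is correct and follows essentially the same route as the paper: the same squeeze $\max\{\phi-t,\psi\}\le u^l_t\le\phi$ and maximum-principle comparisons give monotonicity, (i) via the uniform Lipschitz bound, (iii) via Theorem \ref{AM_geod}, Remark \ref{c_alt_def}, Proposition \ref{energy_dom} and Theorem \ref{E_energy}, and minimality of $v(\phi,\psi)$ exactly as in the paper's Lemma \ref{ray_const_lemma}. Your ``self-similarity'' step for the normalization is just a slightly longer variant of the paper's argument, which feeds the rescaled ray $\tilde v_t=v_{t/|m_v|}\in\mathcal R(\phi,\psi)$ into the minimality $v\le\tilde v$ and reads off $m_v\le\inf_X(\tilde v_1-\phi)=-1$ directly instead of iterating to a contradiction.
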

The proof will be done in a sequence of lemmas.
\begin{lemma} The weak geodesic segments $\{u^l\}_{l >0}$ form an increasing family. The upper semicontinuous regularization of their limit $v(\phi,\psi) = \textup{usc}(\lim_{l \to \infty} u^l)$ is a weak geodesic ray for which
\begin{equation}\label{usc_envelope}
v(\phi,\psi)_t = \textup{usc}(\lim_{l \to \infty} u^l_t) , \ t \in (0,+\infty).
\end{equation}
\end{lemma}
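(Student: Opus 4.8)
The plan is to establish three things in sequence: that the family $\{u^l\}_{l>0}$ is increasing in $l$; that the usc regularization of the pointwise limit is a weak geodesic ray; and finally the slicewise identity \eqref{usc_envelope}. For monotonicity, the key observation is that $\max\{\phi-l,\psi\}$ is increasing in $l$, so the two boundary data for $u^l$ satisfy $\phi \leq \phi$ (constant in $l$) and $\max\{\phi-l,\psi\} \leq \max\{\phi-(l+1),\psi\}$. By the comparison/maximum principle for the Dirichlet problem of Section 2.1 (Berndtsson's construction), an ordering of boundary data on $S_{0,l}\times X$ forces the corresponding solutions to be ordered; one has to be slightly careful because $u^l$ and $u^{l+1}$ live on strips of different widths, so I would first reparametrize $u^{l+1}$ restricted to the sub-strip $S_{0,l}$ — but a cleaner route is to compare the \emph{rescaled} segments on $(0,1)$, using Lemma \ref{lemma2} to identify $u^l$ with $u(\phi,\max\{\phi-l,\psi\})$ after rescaling, and then invoke the comparison principle together with the explicit barrier estimate $v_t \leq u_t \leq u_0 + t(u_1-u_0)$ from \eqref{Bern_est}. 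Since $u^l$ is normalized (joining $\phi$ to something $\leq \phi$, with the slope analysis of Theorem \ref{norm_thm}), the rescaling keeps slopes controlled, and monotonicity in $l$ follows on each fixed time interval.

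Next, to see that $v(\phi,\psi)$ is a weak geodesic ray, I would argue strip-by-strip. Fix $T>0$ and restrict attention to $S_{0,T}\times X$. For $l > T$ the functions $u^l$ are $\omega$-psh and locally bounded there; by monotonicity in $l$ they increase, and by the barrier estimate (or by Theorem \ref{norm_thm}, which gives uniform Lipschitz bounds in $t$ with constant $\max\{|M_u|,|m_u|\}\leq 1$ after normalization) they are uniformly bounded above on compact subsets. Hence $\lim_{l\to\infty} u^l$ is a well-defined $\omega$-psh-in-the-limit function whose usc regularization $v(\phi,\psi)$ is genuinely $\omega$-psh on $S_{0,T}\times X$ (this is the standard fact that an increasing sequence of psh functions, bounded above, has usc regularization psh). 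That the Monge--Ampère equation $(\omega+i\partial\bar\partial v(\phi,\psi))^{n+1}=0$ persists in the limit is the continuity of the complex Monge--Ampère operator along increasing sequences of locally bounded psh functions (Bedford--Taylor), exactly as used in Section 2.1. Finally the boundary behavior at $t=0$: since every $u^l$ has $u^l_0 = \phi$ and the convergence toward $t=0$ is uniform (Theorem \ref{norm_thm}), one gets $\lim_{t\to 0} v(\phi,\psi)_t = \phi$; and since $T$ was arbitrary, $v(\phi,\psi)$ is defined on all of $S_{0,\infty}\times X$, i.e.\ a weak geodesic ray.

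For the slicewise identity \eqref{usc_envelope}, the point is to interchange the time-slice operation with the usc regularization of the $l$-limit. One inclusion is immediate: for each fixed $t$, $v(\phi,\psi)_t \geq \lim_{l\to\infty} u^l_t$ pointwise (since $v(\phi,\psi) \geq \lim_l u^l$ on the strip), hence $v(\phi,\psi)_t \geq \textup{usc}(\lim_l u^l_t)$ because the left side is already $\omega_0$-psh. For the reverse, I would use that $v(\phi,\psi)$ is $S^1$-invariant (it inherits the $t+ir$-independence from the $u^l$), so it is determined by its restriction to the real slice; the usc regularization on the strip and the usc regularization of each slice then agree off a pluripolar set, and since both sides are $\omega_0$-psh for fixed $t$ and equal a.e., they coincide everywhere. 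The one technical subtlety — and I expect this to be \textbf{the main obstacle} — is justifying that $\textup{usc}$ on the total space $S_{0,\infty}\times X$ restricts correctly to $\textup{usc}$ on each slice $\{t\}\times X$: a priori the negligible set where $\lim_l u^l < \textup{usc}(\lim_l u^l)$ could be large on some slices. This is resolved using the uniform Lipschitz-in-$t$ bound from Theorem \ref{norm_thm} applied to each $u^l$ (with constant $\leq 1$), which passes to the limit and shows $\lim_l u^l$ is already continuous in $t$ locally uniformly in $x$ away from the pluripolar set, forcing the slice regularizations to patch together; alternatively one cites the Ross--Witt-Nyström / Berndtsson slicing lemmas for $S^1$-invariant $\omega$-psh functions. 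Once this interchange is licensed, \eqref{usc_envelope} drops out.
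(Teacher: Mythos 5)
There is a genuine gap in your monotonicity step, and it is the heart of the lemma. You claim that $\max\{\phi-l,\psi\}$ is \emph{increasing} in $l$; it is in fact \emph{decreasing} (the term $\phi-l$ decreases as $l$ grows), so the inequality you write for the boundary data is backwards and the comparison principle applied to boundary data alone pushes in the wrong direction. Your ``cleaner route'' does not repair this: rescaling both segments to $(0,1)$ and comparing boundary values gives $u^{l+1}_{(l+1)s}\leq u^{l}_{ls}$ for $s\in(0,1)$, i.e.\ an inequality between the segments at \emph{different} actual times, which is not the statement $u^l_t\leq u^{l+1}_t$ you need at a fixed time $t$. The missing idea is to use the subgeodesic ray $t\mapsto\gamma_t=\max\{\phi-t,\psi\}$ as a competitor/barrier: since $\gamma$ restricted to $(0,l')$ is a subgeodesic with boundary values $\phi$ and $\gamma_{l'}=\max\{\phi-l',\psi\}$, Berndtsson's envelope description gives $\gamma_t\leq u^{l'}_t$ on $(0,l')$; consequently, for $l<l'$ one has $u^{l}_l=\gamma_l\leq u^{l'}_l$ and $u^l_0=u^{l'}_0=\phi$ (uniform limits by Theorem \ref{norm_thm}), so the maximum principle on $S_{0,l}\times X$ yields $u^l\leq u^{l'}$ there. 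The paper packages this by gluing: it extends $u^l$ past $t=l$ by $\gamma_t$, checks via the sub-mean value property that the glued function $\gamma^l$ is a subgeodesic ray (this works because $u^l\geq\gamma$ on $(0,l)$), and observes that $\gamma^l$ is then a competitor in the envelope defining $u^{l'}$, which gives the increasing property of the whole family at once. Your write-up never introduces $\gamma$ as a lower barrier, and without it the argument as proposed fails.

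The remaining two parts of your plan are essentially the paper's. That $v(\phi,\psi)$ solves $(\omega+i\partial\bar\partial v(\phi,\psi))^{n+1}=0$ on each $S_{0,h}\times X$ follows, as you say, from Bedford--Taylor continuity of the Monge--Amp\`ere operator along increasing sequences of locally bounded $\omega$-psh functions. For the slice identity (\ref{usc_envelope}), the paper's mechanism is exactly the one you identify as the resolution of the ``main obstacle'': from Theorem \ref{norm_thm}, $M_{u^l}=\sup_X\max\{-l,\psi-\phi\}/l\leq 0$ and $m_{u^l}=\inf_X\max\{-l,\psi-\phi\}/l\geq-1$, so the $u^l$ (hence their increasing limit) are uniformly Lipschitz in $t$, which forces the usc regularization on the strip to commute with restriction to time slices; your detour through $S^1$-invariance and a.e.\ equality off pluripolar sets is unnecessary once this Lipschitz bound is in hand, but it is not wrong.
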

\begin{proof} It is clear that $t \to \gamma_t = \max \{ \phi-t,\psi\}, \ t >0$ is a weak subgeodesic ray. We define the following family of weak subgeodesic rays $\{t \to \gamma^l_t\}_{l \geq 0}$:
\begin{equation}\label{linearize}
\gamma^l_t =\left\{
	\begin{array}{ll}
		 u^l_t  & \mbox{if } 0 < t < l, \\
		\gamma_t & \mbox{if } t \geq l.
	\end{array}
\right.
\end{equation}
By the sub-mean value property of psh functions, it is clear that each $\gamma^l_t$ is a weak subgeodesic ray. From Berndtsson's construction it also follows that this family is increasing with $l$. In particular, the family $\{u^l\}_{l > 0}$ is also increasing in $l$. We denote $v(\phi,\psi) = \textup{usc}(\lim_{l \to \infty} u^l)$. It follows now from Bedford-Taylor theory that the Monge-Amp\`ere measures $(\omega + i\partial\bar\partial u^l)^{n+1}$ converge weakly to $(\omega + i\partial\bar\partial v(\phi,\psi))^{n+1}$.
This implies that $$(\omega + i\partial\bar\partial v(\phi,\psi))^{n+1}|_{S_{0h}\times X}=0$$ for any $h>0$.
Hence, $t \to v(\phi,\psi)_t$ is a weak geodesic ray.

We now prove (\ref{usc_envelope}). By Theorem \ref{norm_thm}, the limits $u^l_0=\lim_{t \to 0}u^l_t=\phi$ and $u^l_l = \lim_{t \to l}u^l_t=\gamma_l$ are uniform in $X$, hence it follows that
$$M_{u^l}=\sup_X \frac{u^l_l - u^l_0}{l}=\sup_X \frac{\max\{-l,\psi - \phi\}}{l}\leq 0,$$
$$m_{u^l}=\inf_X \frac{u^l_l - u^l_0}{l}=\inf_X \frac{\max\{-l,\psi - \phi\}}{l}\geq -1.$$
This implies that the $u^l$ are uniformly Lipschitz in the $t$-variable, hence so is their limit  $\lim_{l \to \infty} u^l$. This in turn implies that $(\textup{usc}(\lim_{l \to \infty} u^l))_t =  \textup{usc}(\lim_{l \to \infty} u^l_t) , \ t \in (0,+\infty),$ proving the desired result.
\end{proof}
\begin{lemma} \label{ray_const_lemma}$v(\phi,\psi)  \in \mathcal R(\phi,\psi)$, more precisely $v(\phi,\psi) = \inf_{v \in \mathcal R (\phi,\psi)}v$. In particular, $t \to v(\phi,\psi)_t$ is constant if and only if $\mathcal R(\phi,\psi)$ contains only the constant ray $\phi$.
\end{lemma}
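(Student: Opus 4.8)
The plan is to prove the two assertions of Lemma~\ref{ray_const_lemma} separately: first that $v(\phi,\psi) \in \mathcal R(\phi,\psi)$, and then that it is the pointwise infimum of $\mathcal R(\phi,\psi)$; the final "in particular" statement is then immediate, since the constant ray $\phi$ is always in $\mathcal R(\phi,\psi)$, so $v(\phi,\psi) \leq \phi$ with equality forced iff $\mathcal R(\phi,\psi) = \{\phi\}$.

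For membership in $\mathcal R(\phi,\psi)$: we already know from the previous lemma that $v(\phi,\psi)$ is a weak geodesic ray with $v(\phi,\psi)_t = \textup{usc}(\lim_l u^l_t)$, and the uniform Lipschitz bounds there give $-1 \leq m_{u^l}$ and $M_{u^l} \leq 0$ for every $l$; I would first check that these pass to the limit to give that $v(\phi,\psi)$ is normalized (i.e.\ $M_v = 0$, $m_v = -1$, unless it is constant). The equality $\lim_{t\to 0} v(\phi,\psi)_t = \phi$ follows because each $u^l_0 = \phi$ and the family is uniformly Lipschitz in $t$ and increasing in $l$, so one can squeeze the limit between $\phi - t$-type barriers and $\phi$. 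For the endpoint condition $v(\phi,\psi)_\infty \geq \psi$: the key is the auxiliary subgeodesic rays $\gamma^l$ from the previous lemma, which satisfy $\gamma^l_t = \gamma_t = \max\{\phi - t,\psi\} \geq \psi$ for $t \geq l$, and which increase to something $\leq v(\phi,\psi)$ (after usc-regularization, using that $\gamma^l \leq u^{l'}$ for $l' \geq$ appropriate, or directly that $\gamma^l$ is an admissible competitor below $v(\phi,\psi)$); taking $t\to\infty$ and then $l\to\infty$ shows $v(\phi,\psi)_\infty \geq \psi$.

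For the envelope statement $v(\phi,\psi) = \inf_{v \in \mathcal R(\phi,\psi)} v$: since $v(\phi,\psi) \in \mathcal R(\phi,\psi)$, it suffices to show $v(\phi,\psi)_t \leq w_t$ for every $w \in \mathcal R(\phi,\psi)$ and every $t$. Fix such a $w$ and fix $l>0$. I would compare $u^l$ with $w$ on the finite strip $S_{0,l}\times X$ using the maximum principle (Theorem 6.4 of~\cite{b1} in the form used in Section~2.1): on $(0,l)\ni t \to w_t$ one has $w_0 = \phi = u^l_0$, and at the other end $w_l \geq \max\{\phi - l, \psi\}$ — here one uses that $w$ is normalized, so $w_l \geq w_0 - l = \phi - l$, and that $w_l \geq w_\infty \geq \psi$ by monotonicity of the normalized ray in $t$. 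Thus $w$ restricted to $[0,l]$ is a (sub)geodesic lying above $u^l = u(\phi, \max\{\phi-l,\psi\})$ at both endpoints, hence $w_t \geq u^l_t$ for $t \in (0,l)$ by convexity/maximum principle. Letting $l \to \infty$ gives $w_t \geq \lim_l u^l_t$, and since $w_t$ is usc this yields $w_t \geq \textup{usc}(\lim_l u^l_t) = v(\phi,\psi)_t$, as desired.

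The main obstacle I anticipate is the careful handling of the endpoint behavior at $t = \infty$: the limits there are only pointwise (not uniform, since $\psi$ may be unbounded), so one cannot directly invoke the clean maximum-principle setup of Section~2.1 on the half-line. The fix is to work only on finite strips $S_{0,l}\times X$ where all data are bounded and the limits are uniform (by Theorem~\ref{norm_thm}), and to transfer the endpoint inequality $w_l \geq \max\{\phi-l,\psi\}$ from the already-established monotonicity $w_{t_1} \leq w_{t_0}$ for $t_0 < t_1$ of normalized rays together with $w_\infty \geq \psi$. A secondary technical point is justifying that usc-regularization commutes with the relevant suprema/limits, but this is exactly the kind of Bedford–Taylor bookkeeping already used in the preceding lemma and should go through verbatim.
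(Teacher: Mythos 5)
Your second half is sound and is essentially the paper's own argument: comparing $w\in\mathcal R(\phi,\psi)$ with $u^l$ on the finite strip via the maximum principle, using $w_l\ge w_0-l=\phi-l$ (from $m_w=-1$) and $w_l\ge w_\infty\ge\psi$, then letting $l\to\infty$. The genuine gap is in your membership argument. You assert that the bounds $-1\le m_{u^l}$ and $M_{u^l}\le 0$ ``pass to the limit to give that $v(\phi,\psi)$ is normalized.'' Passing these inequalities to the increasing limit only gives $m_{v(\phi,\psi)}\ge -1$ and $M_{v(\phi,\psi)}\le 0$; it cannot give the equalities $M_{v(\phi,\psi)}=0$ and (in the non-constant case) $m_{v(\phi,\psi)}=-1$ that the definition of a normalized ray requires. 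Moreover, the exact value $m_{u^l}=-1$ genuinely fails to persist in the limit: if $\psi\in\mathcal E(X,\omega_0)$ is unbounded, then $m_{u^l}=-1$ for every $l$, yet the limit ray is the constant ray $\phi$ with $m=M=0$ --- this degeneration is precisely the content of Theorem \ref{ray_const}(iii), so no limiting argument of the kind you propose can work.

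What is actually needed (and what the paper does) is the following order of steps. First, from $\gamma_t\le v(\phi,\psi)_t\le\phi$ one gets $v(\phi,\psi)_0=\phi$, $v(\phi,\psi)_\infty\ge\psi$, and, using the $t$-independence of $\sup_X\bigl(v(\phi,\psi)_t-\phi\bigr)/t$ from Theorem \ref{norm_thm} and letting $t\to\infty$, the equality $M_{v(\phi,\psi)}=0$ (together with $m_{v(\phi,\psi)}\ge -1$). Second, one proves the minimality $v(\phi,\psi)\le h$ for all $h\in\mathcal R(\phi,\psi)$ --- your argument for this step is fine and does not use membership. Only then can one settle $m_{v(\phi,\psi)}=-1$ in the non-constant case: the normalization $\tilde v$ of $v(\phi,\psi)$ (a reparametrization in $t$, since $M_{v(\phi,\psi)}=0$) has the same endpoint data, hence $\tilde v\in\mathcal R(\phi,\psi)$, so $v(\phi,\psi)\le\tilde v$ by minimality, and comparing at $t=1$ via Theorem \ref{norm_thm} gives $m_{v(\phi,\psi)}=\inf_X\bigl(v(\phi,\psi)_1-\phi\bigr)\le\inf_X\bigl(\tilde v_1-\phi\bigr)=-1$. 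Reordering your proof accordingly and inserting this self-comparison with $\tilde v$ closes the gap; as written, the membership claim is unsupported.
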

\begin{proof} We start out by observing that $\max \{ \phi-t,\psi\}= \gamma_t \leq u^l_t \leq \phi, \ t \in (0,l), \ l >0$. After taking the limit $l \to + \infty$, then regularizing, it follows that
\begin{equation}\label{v_estimate}
\gamma_t \leq v(\phi,\psi)_t \leq \phi, \ t > 0.
\end{equation}
This implies that $\lim_{t \to +\infty}v(\phi,\psi)_t \geq \psi$ and $v(\phi,\psi)_0=\lim_{t \to 0}v(\phi,\psi)_t = \phi$. By Theorem \ref{norm_thm}, this last limit is uniform. Hence, using (\ref{v_estimate}) again and Theorem \ref{norm_thm}, we find that \begin{equation}\label{Mv}M_{v(\phi,\psi)} = \sup_X \frac{v(\phi,\psi)_t - \phi}{t} \geq \sup_X\frac{\gamma_t - \phi}{t},
\end{equation}
\begin{equation}\label{mv}
m_{v(\phi,\psi)} =\inf_X \frac{v(\phi,\psi)_t - \phi}{t}\geq \sup_X\frac{\gamma_t - \phi}{t},
\end{equation}
for any $t >0$. Since $t \to v(\phi,\psi)_t$ is decreasing it follows that $M_{v(\phi,\psi)} \leq 0$. By taking the limit $t \to + \infty$ in (\ref{Mv}) we obtain that $$M_{v(\phi,\psi)} = 0.$$
Turning to (\ref{mv}) we conclude that $$m_{v(\phi,\psi)} \geq -1.$$
To see that $v(\phi,\psi) \in \mathcal R(\phi,\psi)$ it is enough to prove that either $t \to v(\phi,\psi)_t$ is constant or $m_{v(\phi,\psi)} = -1$. To conclude this, first we prove that $v(\phi,\psi) \leq h$ for any $h \in \mathcal R(\phi,\psi)$. If $h \in \mathcal R(\phi,\psi)$, then the limit $h_0=\lim_{t \to 0} h_t =\phi$ is uniform and since $m_h=-1$ we have $h_l \geq \max\{\phi-l,\psi\}=\gamma_l, \ l >0$.  By the maximum principle this implies that $u^l_t \leq h_t, \ t \in [0,l]$. Letting $l \to +\infty$ in this estimate, then regularizing, we arrive at $v(\phi,\psi) \leq h$.

If $t \to v(\phi,\psi)_t$ is non-constant, then its normalization  ${\tilde v} \in \mathcal R(\phi,\psi)$ is non-constant as well. Since $v_0(\phi,\psi) = {\tilde v}_0=\phi$, $v(\phi,\psi) \leq {\tilde v}$  and $m_{\tilde v} = -1$,  it follows from Theorem \ref{norm_thm} that
$$m_{v(\phi,\psi)} =\inf_X \frac{v(\phi,\psi)_1 - \phi}{1} \leq \inf_X \frac{\tilde v_1 - \phi}{1} = -1.$$
This implies that $m_{v(\phi,\psi)} = -1$, finishing the proof.
\end{proof}
\begin{lemma} $AM(v(\phi,\psi)_t)=AM(\phi)+c_{\psi}t$, in particular $v(\phi,\psi)$ is constant if and only if $\psi \in \mathcal E(X,\omega)$.
\end{lemma}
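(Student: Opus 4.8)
The plan is to transport the Aubin--Mabuchi energy along the approximating segments $u^l$ and then pass to the limit $l\to\infty$. Fix $t>0$ and take $l>t$. Since $(0,l)\ni s\to u^l_s$ is a weak geodesic segment, Theorem \ref{AM_geod} tells us that $s\to AM(u^l_s)$ is affine on $(0,l)$; by Theorem \ref{norm_thm} the boundary values $u^l_s\to\phi$ (as $s\to 0$) and $u^l_s\to\max\{\phi-l,\psi\}$ (as $s\to l$) are attained uniformly, and $AM$ is continuous under uniform convergence of bounded $\omega_0$-psh potentials (immediate from \eqref{AM_diff}), so this affine function extends to $[0,l]$ with the expected endpoint values. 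Hence
$$AM(u^l_t)=AM(\phi)+t\cdot\frac{AM(\max\{\phi-l,\psi\})-AM(\phi)}{l}.$$
By Remark \ref{c_alt_def}, applied with $\beta=\phi$, the quotient on the right converges to $c_\psi$ as $l\to\infty$, so $\lim_{l\to\infty}AM(u^l_t)=AM(\phi)+c_\psi t$.

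Next I would pass to the limit on the left-hand side. The estimate $\gamma_t\le u^l_t\le\phi$ from the proof of Lemma \ref{ray_const_lemma}, together with $\gamma_t=\max\{\phi-t,\psi\}\ge\phi-t$, shows that $\{u^l_t\}_{l>t}$ is a uniformly bounded family, increasing in $l$ by the first lemma, whose pointwise limit has $v(\phi,\psi)_t=\textup{usc}(\lim_l u^l_t)$ as its regularization by \eqref{usc_envelope}, and this limit is bounded. Continuity of $AM$ along bounded, monotone increasing sequences --- which follows from weak convergence of the mixed Monge--Amp\`ere measures (Bedford--Taylor) used in formula \eqref{AM_diff}, the passage to the upper semicontinuous regularization being harmless since it alters the pointwise limit only on a pluripolar set --- then gives $AM(v(\phi,\psi)_t)=\lim_l AM(u^l_t)=AM(\phi)+c_\psi t$, which is the asserted identity.

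For the last statement I would argue as follows. If $\psi\in\mathcal E(X,\omega_0)$, then $c_\psi=0$ by Theorem \ref{E_energy}, so $AM(v(\phi,\psi)_t)=AM(\phi)=AM(v(\phi,\psi)_0)$ for every $t$; since $t\to v(\phi,\psi)_t$ is decreasing with $v(\phi,\psi)_0=\phi$ and all the $v(\phi,\psi)_t$ are bounded, Proposition \ref{energy_dom} forces $v(\phi,\psi)_t=\phi$ for all $t$, i.e. the ray is constant. Conversely, if $t\to v(\phi,\psi)_t$ is constant then $AM(v(\phi,\psi)_t)=AM(\phi)$ for all $t>0$, hence $c_\psi t=0$ and so $c_\psi=0$, which by Theorem \ref{E_energy} means $\psi\in\mathcal E(X,\omega_0)$.

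The step I expect to require the most care is the second one: justifying that $AM$ is continuous along the increasing family $u^l_t$ and, in particular, that replacing $\lim_l u^l_t$ by its upper semicontinuous regularization does not change the energy. This is standard Bedford--Taylor material once one knows the relevant potentials stay uniformly bounded --- which is exactly what the sandwich $\phi-t\le u^l_t\le\phi$ provides --- so that the weak convergence of the various mixed Monge--Amp\`ere measures, together with \eqref{AM_diff}, can be invoked term by term.
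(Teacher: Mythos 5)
Your proposal is correct and follows essentially the same route as the paper: the affine formula for $AM(u^l_t)$ from Theorem \ref{AM_geod} together with the uniform boundary behaviour from Theorem \ref{norm_thm}, the limit $c_\psi$ via Remark \ref{c_alt_def}, Bedford--Taylor continuity of $AM$ along the bounded increasing family (with the usc regularization changing nothing), and finally Proposition \ref{energy_dom} plus Theorem \ref{E_energy} for the equivalence with $\psi\in\mathcal E(X,\omega_0)$. No gaps to report.
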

\begin{proof} Since the segments $u^l$ are weak geodesics, by Theorem \ref{AM_geod} and Theorem \ref{norm_thm} we have:
$$AM(u^l_t) = AM(\phi) + \frac{t}{l}(AM(\max\{\phi - l, \psi\})-AM(\phi)), \ t \in (0,l), l >0.$$
As $\l \to \infty$, it follows from (\ref{usc_envelope}) that the sequence $u^l_t$ increases a.e. to $v(\phi,\psi)_t, \ t >0$. Using this, Bedford-Taylor theory implies that $AM(u^l_t)\to AM(v(\phi,\psi)_t)$. By Remark \ref{c_alt_def},  the right hand side of the last identity converges to $AM(\phi) + t c_\psi$. We conclude that
$$AM(v(\phi,\psi)_t) = AM(\phi) + t c_\psi, \ t \in (0, +\infty).$$
Since $v(\phi,\psi)_t \leq \phi$, by Proposition \ref{energy_dom} it follows that $t \to v(\phi,\psi)_t$ is constant if and only if $t \to AM(v(\phi,\psi)_t)$ is constant. This last condition is equivalent to $c_\psi =0$. Hence, by Theorem \ref{E_energy} it follows that $t \to v(\phi,\psi)_t$ is constant if and only if $\psi \in \mathcal E(X,\omega)$.
\end{proof}

\begin{corollary} \label{infinitypotential}Suppose $t \to u_t$ is a normalized weak geodesic ray such that $u_0 = \phi$ and $u_\infty = \psi$. Then for the normalized ray $t \to v(\phi,\psi)_t$ it is also true that $v(\phi,\psi)_\infty=\psi$.
\end{corollary}
\begin{proof} As $t \to u_t$ is a normalized, it follows that $u \in \mathcal R(\phi,\psi)$. By Theorem \ref{ray_const}(ii) it follows that $\psi \leq v(\phi,\psi)_t \leq u_t$ for all $t \in [0,\infty)$. From this the conclusion follows.
\end{proof}
\section{The inverse Legendre transform of a weak geodesic ray and $\mathcal E(X,\omega)$}

We start this section by proving a result about the maximality of the Legendre transform of a weak geodesic ray.

\begin{proposition}\label{ray_Legendre}Given a weak geodesic ray $  (0,+\infty)\ni t \to \phi_t  \in \textup{PSH}(X,\omega)$, its Legendre transform $ \Bbb R \ni \tau \to \phi^*_\tau = \inf_{t \in (0,+\infty)}(\phi_t - t\tau)\in \textup{PSH}(X,\omega)$
satisfies
$$\phi^*_\tau = P(\phi^*_\tau+C,\phi_0), \ \tau \in \Bbb R,C>0.$$
In particular, $P_{[\phi^*_\tau]}(\phi_0)=\phi^*_\tau$.
\end{proposition}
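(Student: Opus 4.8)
The plan is to analyze the Legendre transform $\tau\to\phi^*_\tau$ of the weak geodesic ray and show it is a fixed point of the envelope operation $P(\,\cdot+C,\phi_0)$. First I would record the basic structural facts about $\phi^*_\tau=\inf_{t>0}(\phi_t-t\tau)$: since $t\to\phi_t$ is a normalized (decreasing, Lipschitz) weak geodesic ray, $\phi^*_\tau$ is a genuine $\omega_0$-psh function for every $\tau$ in some half-line $(-\infty,\tau_{\max})$ (and $-\infty$ beyond), it is concave in $\tau$, and as $\tau\to-\infty$ it increases to $\phi_0$. Crucially, because $\inf_{t}$ of the affine-in-$\tau$ functions $\phi_t-t\tau$ is concave and the ray is Lipschitz with slopes in $[m_u,0]$, the transform stabilizes: $\phi^*_\tau=\phi_0$ for $\tau$ sufficiently negative, and $\phi^*_\tau\le\phi_0$ always. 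Thus $\tau\to\phi^*_\tau$ is (essentially) a test curve in the sense of the definition recalled in Section 2.4, and $\phi_t=\mathrm{usc}(\sup_\tau(\phi^*_\tau+t\tau))$ recovers the original ray — this is the Legendre duality underlying the Ross--Witt-Nystr\"om correspondence.

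Next I would prove the inequality $\phi^*_\tau\le P(\phi^*_\tau+C,\phi_0)$ for each fixed $C>0$. This is immediate: $\phi^*_\tau$ is $\omega_0$-psh, $\phi^*_\tau\le\phi_0$, and $\phi^*_\tau\le\phi^*_\tau+C$, so $\phi^*_\tau$ is a competitor in the envelope defining $P(\phi^*_\tau+C,\phi_0)=P(\min\{\phi^*_\tau+C,\phi_0\})$. The content is the reverse inequality $P(\phi^*_\tau+C,\phi_0)\le\phi^*_\tau$. Set $w=P(\phi^*_\tau+C,\phi_0)$; this is $\omega_0$-psh with $w\le\phi_0$ and $w\le\phi^*_\tau+C$, i.e. $\phi^*_\tau\ge w-C$. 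The idea is to feed $w-C$ (or rather a curve built from it together with the ray) into the defining extremal property of the ray. Concretely, consider the subgeodesic $r\to\max\{w-C+r\tau,\ \phi_r\}$ or, more in the spirit of Section 4, use that the ray $\phi_t$ is the lower envelope of normalized rays dominating its endpoint data; comparing $w-C$ against $\phi^*_\tau$ via the maximum principle applied to the subgeodesics $\phi_r$ and the constant-in-$r$ affine family $w-C+r\tau$ forces $\sup_X(w-C+r\tau-\phi_r)$ to be nonincreasing in $r$, hence $w-C+r\tau-\phi_r\le \sup_X(w-C-\phi_0)\le 0$ for all $r>0$; rearranging gives $w-C\le\phi_r-r\tau$ for all $r$, so $w-C\le\phi^*_\tau$, i.e. $w\le\phi^*_\tau+C$ — but that only re-derives what we knew. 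The genuine gain comes from using the sharper fact, available from Section 2.1 / the maximum principle, that $\sup_X(\phi_r-(w-C+r\tau))$ is actually \emph{constant} (not just monotone) along a geodesic when one side is affine; combined with $w\le\phi_0=\phi_0$ and that the supremum is attained at $r=0$ where $w-C\le\phi_0$, this will pin down $w-C\le\inf_t(\phi_t-t\tau)=\phi^*_\tau$.

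Finally, the last sentence follows formally: since $\phi^*_\tau=P(\phi^*_\tau+C,\phi_0)$ for every $C>0$, the family $C\to P(\phi^*_\tau+C,\phi_0)$ is constant, so its limit and the usc regularization of that limit both equal $\phi^*_\tau$; but by definition $P_{[\phi^*_\tau]}(\phi_0)=\mathrm{usc}(\lim_{C\to+\infty}P(\phi^*_\tau+C,\phi_0))$, hence $P_{[\phi^*_\tau]}(\phi_0)=\phi^*_\tau$. I expect the main obstacle to be the reverse inequality in the middle step — specifically, justifying that the relevant supremum along the geodesic is constant rather than merely monotone, and transferring that from the bounded setting of Section 2.1 to the transform $\phi^*_\tau$ (which can be unbounded). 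The cleanest route is probably to truncate, apply the maximum principle of \cite{b1} to $\max\{w-C,\psi\}$-type data at finite time, and pass to the limit, using the Lipschitz-in-$t$ control from Theorem \ref{norm_thm} to keep everything locally bounded.
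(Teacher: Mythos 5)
The easy half and the final step are fine: $\phi^*_\tau\le P(\phi^*_\tau+C,\phi_0)$ is immediate since $\phi^*_\tau$ is a competitor, and once $\phi^*_\tau=P(\phi^*_\tau+C,\phi_0)$ holds for all $C$ the identity $P_{[\phi^*_\tau]}(\phi_0)=\phi^*_\tau$ follows by definition. But the heart of the proposition, the reverse inequality $w:=P(\phi^*_\tau+C,\phi_0)\le\phi^*_\tau$ \emph{without losing the constant} $C$, is never established in your proposal. Your first attempt, as you admit, only recovers $w-C\le\phi^*_\tau$, which is trivial from $w\le\phi^*_\tau+C$. Your second attempt rests on the claim that $\sup_X\bigl(\phi_r-(w-C+r\tau)\bigr)$ is \emph{constant} in $r$; this is not in Section 2.1 and is false in general (Theorem 3.4 says $\sup_X\frac{\phi_a-\phi_b}{a-b}$ is independent of $a,b$ along one geodesic; it says nothing of the sort about the difference between a geodesic and a fixed potential plus an affine term, which is typically genuinely convex and nonconstant). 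Moreover, even granting that claim, the conclusion you state is again $w-C\le\phi^*_\tau$, i.e.\ the known inequality, not the required $w\le\phi^*_\tau$; and to bound $w$ from above you would in any case need to control $\sup_X(w+r\tau-\phi_r)$, not a lower bound on the supremum of the opposite difference. (A smaller point: the proposition is stated for an arbitrary weak geodesic ray, so you should not assume it is normalized or decreasing; the structural facts you list survive, but via Theorem 3.4, not by fiat.)

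The missing idea, which is the paper's proof, is to amortize the loss of $C$ over geodesic segments of length $l\to\infty$: compare the geodesic segment $h_t=w-Ct$, $t\in[0,1]$, with the rescaled tilted segments $g^l_t=\phi_{tl}-tl\tau$. At the endpoints $h_0=w\le\phi_0=g^l_0$ and $h_1=w-C\le\phi^*_\tau\le\phi_l-l\tau=g^l_1$ (this is exactly where the trivial inequality gets used), so the maximum principle \cite[Theorem 6.4]{b1} gives $w-Ct\le\phi_{tl}-tl\tau$ on $[0,1]$; fixing $s=tl$ and letting $t\to0$, $l\to\infty$ kills the $-Ct$ term and yields $w\le\phi_s-s\tau$ for every $s>0$, i.e.\ $w\le\phi^*_\tau$. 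If you want to salvage your monotonicity route, it can be repaired along the same lines: show that $r\mapsto\sup_X(w+r\tau-\phi_r)$ is \emph{convex} (compare the subgeodesic $w+r\tau$ with $\phi_r$ plus the affine interpolant of the endpoint suprema, again via the maximum principle, using that $\phi_r$ is bounded with uniform boundary limits by Theorem 3.4); since this function is bounded above by $C$ on $(0,\infty)$ by the trivial inequality, convexity forces it to be nonincreasing, hence bounded by $\lim_{r\to0}\sup_X(w+r\tau-\phi_r)=\sup_X(w-\phi_0)\le0$, which is the desired conclusion. Either way, the decisive step — converting the inequality with loss $C$ into one without loss by exploiting arbitrarily long geodesic segments — is absent from your write-up.
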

\begin{proof}Fix $\tau \in \Bbb R$. The fact that $\phi^*_\tau \in \textup{PSH}(X,\omega)$ follows from Kiselman's minimum principle. Suppose that $\phi^*_\tau \neq -\infty$ and fix $C > 0$. Since $\phi^*_\tau \leq \phi_0$, it results that $P(\phi^*_\tau+C,\phi_0) \geq \phi^*_\tau.$ Hence we only have to prove that:
$$P(\phi^*_\tau+C,\phi_0) \leq \phi^*_\tau.$$
Let $[0,1] \ni t \to g^l_t,h_t \in \textup{PSH}(X,\omega), \ l \geq 0$ be the weak geodesic segments defined by the formulas: $$g^l_t = \phi_{tl}-tl\tau,$$
$$h_t = P(\phi^*_\tau+C,\phi_0)-Ct.$$
Then we have $h_0 \leq \phi_0=\lim_{t \to 0}g^l_t=g^{l}_0$ and $h_1 \leq \phi^*_\tau \leq g^{l}_1$ for any $l \geq 0$. Hence, by the maximum principle we have
$$h_t \leq g^{l}_t, \ t \in [0,1],l \geq 0.$$
Taking the infimum in the above estimate over $l \in [0,+\infty)$ and then taking the supremum over $t \in [0,1]$,  we obtain:
$$P(\phi^*_\tau+C,\phi_0) \leq \phi^*_\tau.$$
Letting $C \to +\infty$ we obtain the last statement of the proposition.
\end{proof}

The above proposition combined together with Theorem \ref{m_norm_thm} and Theorem \ref{RWN_main} gives the following result of independent interest:

\begin{corollary} \label{RWN_main_inverse} The construction of Section 2.4 gives rise to all geodesic rays $[0,\infty) \ni t \to v_t \in \textup{PSH}(X,\o) \cap L^\infty(X)$.  
\end{corollary}
\begin{proof} We have to argue that 
$$\psi_\tau = \inf_{t \geq 0} (v_t - t\tau).$$
is a 'maximal' test curve. First we argue that $\tau \to \psi_\tau$ is a test curve. Concavity of $\tau \to \psi_\tau(z)$ follows from the convexity of the curves $t \to v_t(z)$. Condition (ii) of Definition \eqref{testcurvedef} is a consequence of Theorem \ref{m_norm_thm}. Finally, as $v_0 = \psi_{-\infty}$, maximality of the test curve $\tau \to \psi_\tau$ follows from the previous proposition:
$$\psi_\tau = P_{[\psi_\tau]}(\psi_{-\infty}).$$
\end{proof}

We will be interested in applying the above proposition in the case when $t \to \phi_t$ is a normalized geodesic ray and $\tau =0$. In this case, we have:
$$\phi^*_0 = \inf_{t \in (0,+\infty)}\phi_t = \lim_{t \to +\infty}\phi_t =: \phi_\infty.$$
Using our construction of weak geodesic rays we can now characterize $\mathcal E(X,\omega)$ in terms of envelopes:

\begin{theorem}Suppose $\psi \in \textup{PSH}(X,\omega)$ and $\phi \in \textup{PSH}(X,\omega) \cap C(X)$. Then $\psi \in \mathcal E(X, \omega)$ if and only if
$$P_{[\psi]}(\phi)=\phi.$$
\end{theorem}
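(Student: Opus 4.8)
The plan is to derive both implications from the weak geodesic ray $v(\phi,\psi)$ of Theorem \ref{ray_const}, after a harmless normalization. First I would reduce to the case $\psi\le\phi$: since $\psi$ is $\omega_0$-psh it is bounded above and $\phi$ is bounded below, so $\psi\le\phi+K$ for $K$ large, and replacing $\phi$ by $\phi+K$ (still continuous and $\omega_0$-psh) changes nothing, because $\min\{\psi+C,\phi+K\}=\min\{\psi+(C-K),\phi\}+K$ gives $P_{[\psi]}(\phi+K)=P_{[\psi]}(\phi)+K$, while the hypothesis $\psi\in\mathcal E(X,\omega_0)$ does not involve $\phi$. Assuming now $\psi\le\phi$, one has $P(\psi,\phi)=P(\psi)=\psi$ and $C\mapsto P(\psi+C,\phi)$ is increasing, hence $\psi\le P_{[\psi]}(\phi)\le\phi$; in particular, if $\psi\in\mathcal E(X,\omega_0)$ then $P_{[\psi]}(\phi)\in\mathcal E(X,\omega_0)$, by stability of $\mathcal E(X,\omega_0)$ under passing to a larger $\omega_0$-psh function (see \cite{gz}).

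For the implication $\psi\in\mathcal E(X,\omega_0)\Rightarrow P_{[\psi]}(\phi)=\phi$, set $\chi=P_{[\psi]}(\phi)$, so that $\chi\in\mathcal E(X,\omega_0)$ and $\chi\le\phi$ by the previous paragraph. By Proposition \ref{rwn_lemma1}, $\chi$ is maximal with respect to $\phi$, i.e. $\chi=\phi$ almost everywhere with respect to $(\omega_0+i\partial\bar\partial\chi)^n$, and in particular $\chi\ge\phi$ a.e. with respect to this measure. Since $\chi\in\mathcal E(X,\omega_0)$ and $\phi\in PSH(X,\omega_0)\cap L^\infty(X)$, the domination principle of Proposition \ref{domination} gives $\chi\ge\phi$ on all of $X$, so $\chi=\phi$.

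For the converse I would argue contrapositively, and this is where Theorem \ref{ray_const} enters, the statement itself mentioning no geodesics. Suppose $\psi\notin\mathcal E(X,\omega_0)$ and let $v=v(\phi,\psi)$. By Theorem \ref{E_energy} we have $c_\psi\ne 0$, so by Theorem \ref{ray_const}(iii) the function $t\mapsto AM(v_t)=AM(\phi)+c_\psi t$ is non-constant; since $t\mapsto v_t$ is decreasing with $v_0=\phi$, were $v_\infty:=\lim_{t\to\infty}v_t$ equal to $\phi$ we would have $v_t\equiv\phi$ and $AM(v_t)$ constant, a contradiction. Hence $v_\infty\le\phi$ but $v_\infty\ne\phi$, so $v_\infty(x)<\phi(x)$ for some $x\in X$. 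Applying Proposition \ref{ray_Legendre} to $v$ with $\tau=0$, the Legendre transform at $0$ is $v^*_0=\inf_{t>0}v_t=v_\infty$, which is $\not\equiv-\infty$ since $v_\infty\ge\psi$, so the proposition yields $P_{[v_\infty]}(\phi)=v_\infty$. Finally $\psi\le v_\infty$ because $v\in\mathcal R(\phi,\psi)$, and $P_{[\cdot]}(\phi)$ is obviously monotone in its target singularity type, so $P_{[\psi]}(\phi)\le P_{[v_\infty]}(\phi)=v_\infty$; evaluating at $x$ gives $P_{[\psi]}(\phi)(x)<\phi(x)$, hence $P_{[\psi]}(\phi)\ne\phi$.

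The main obstacle is not any single computation but recognizing that the correct auxiliary object is the ray $v(\phi,\psi)$ together with its inverse Legendre transform at the single value $\tau=0$: the limit $v_\infty$ lies between $\psi$ and $\phi$, is a fixed point of the operator $\eta\mapsto P_{[\eta]}(\phi)$ by Proposition \ref{ray_Legendre}, and coincides with $\phi$ precisely when $\psi\in\mathcal E(X,\omega_0)$ by Theorem \ref{ray_const}(iii); combined with the trivial monotonicity of $P_{[\cdot]}(\phi)$ this forces the equivalence. A secondary technical point one must be careful about is the stability of $\mathcal E(X,\omega_0)$ under increasing the potential, which is exactly what allows the application of the domination principle in the easy direction; everything else is bookkeeping around the envelope definitions and the reduction $\psi\le\phi$.
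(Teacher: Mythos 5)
Your proposal is correct and follows essentially the same route as the paper: the easy direction via Proposition \ref{rwn_lemma1} and the domination principle of Proposition \ref{domination}, and the hard direction via the ray $v(\phi,\psi)$ of Theorem \ref{ray_const}, its Legendre transform at $\tau=0$, Proposition \ref{ray_Legendre}, and monotonicity of the envelope in the singularity class. The only differences are cosmetic: you normalize to $\psi\le\phi$ where the paper works with $\psi-D$, and you run the hard direction contrapositively (ray non-constant $\Rightarrow P_{[\psi]}(\phi)\ne\phi$) whereas the paper argues directly, but the chain of inequalities and the lemmas invoked are the same.
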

\begin{proof}Suppose $\psi \in \textup{PSH}(X,\omega)$ is such that $P_{[\psi]}(\phi)=\phi.$ There exists $D >0$ such that $\psi - D < \phi$. Let $t \to v_t(\phi,\psi-D)$ be the normalized weak geodesic ray constructed in Theorem $\ref{ray_const}$. As mentioned above, in this case we have
$$v^*_{0}= \inf_{t \in (0,+\infty)}v(\phi,\psi-D)_t=\lim_{t + \infty}v_t(\phi,\psi-D) = v_\infty.$$
We also have the following sequence of inequalities:
$$\phi=P_{[\psi]}(\phi)=P_{[\psi-D]}(\phi) \leq P_{[v^*_0]}(\phi)=v^*_0\leq \phi,$$
where we have used the fact that $\psi -D \leq v_\infty = v^*_0$ and Proposition \ref{ray_Legendre}. It follows from this that $\phi = v_\infty \leq v(\phi,\psi-D)_t \leq \phi, \ t > 0 $, hence the normalized weak ray $t \to v(\phi,\psi-D)_t$ is constant equal to $\phi$. From this, using Theorem \ref{ray_const}(iii), we conclude that $\psi \in \mathcal E(X,\omega)$.

To prove the other direction, suppose now that $\psi \in \mathcal E(X,\omega)$. Since $\psi -D \leq P_{[\psi]}(\phi) \leq \phi$, it follows from Theorem \ref{E_energy} that $P_{[\psi]}(\phi) \in \mathcal E(X,\omega)$. By Proposition \ref{rwn_lemma1} $P_{[\psi]}(\phi)$ is maximal with respect to $\phi$, hence $P_{[\psi]}(\phi) \geq \phi$ a.e. with respect to $(\omega + i \partial\bar \partial P_{[\psi]}(\phi))^n$. Now, Proposition \ref{domination} implies that $P_{[\psi]}(\phi) \geq \phi$ holds everywhere.
\end{proof}

We remark in passing, that when $\psi$ is assumed to have small unbounded locus, then one can give a proof to the above result, using the maximum principle of \cite{begz} instead of Theorem $\ref{ray_const}$.

\section{Connection with analytic test configurations}

Again, we consider the weak subgeodesic ray $t \to \gamma_t = \max\{ \phi-t,\psi\}, \ t \geq 0$
and its Legendre transform $\tau \to \gamma^*_\tau$:
$$\gamma^*_\tau = \inf_{t \in [0,+\infty)}(\gamma_t - t\tau), \tau \in \Bbb R.$$
We can easily verify that $\gamma^*_\tau$ is a test curve, as defined in Section 2.4,  which can be specifically given:
\begin{equation}\label{tc_prop}
\gamma_\tau^*=\left\{
	\begin{array}{ll}
		\phi & \tau  \in (-\infty,-1),\\
        (1+\tau) \psi - \tau \phi & \tau \in [-1,0], \\
		-\infty &\tau \in (0,\infty).
	\end{array}
\right.
\end{equation}
As pointed out to us by J. Ross and D. Witt-Nystr\"om, this test curve can be seen as a generalization of a test curve arising from deformations to the normal cone.

In turns out that the geodesic ray constructed out of this test curve using the method of $\cite{rwn1}$ is the same as the one constructed in Theorem \ref{ray_const}.

\begin{theorem}Suppose $\phi,\psi \in \textup{PSH}(\omega)$ with $\phi$ bounded and $\psi \leq \phi$. Then the weak geodesic ray $v(\phi,\psi)$  is the same as the  ray obtained from the special test curve $\tau \to \gamma^*_\tau$ using the method of \cite{rwn1}.
\end{theorem}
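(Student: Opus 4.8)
The plan is to sandwich the two rays against each other. Writing $t\to w_t:=\textup{usc}\big(\sup_{\tau}(\tilde\gamma^*_\tau+t\tau)\big)$, with $\tilde\gamma^*_\tau:=P_{[\gamma^*_\tau]}(\phi)$, for the ray produced from the test curve $\gamma^*$ by the method of Section 2.4, I would prove $v(\phi,\psi)\le w$ and $w\le v(\phi,\psi)$ separately. By Theorem \ref{RWN_main} the curve $t\to w_t$ is a weak geodesic ray emanating from $\phi$, and by Theorem \ref{norm_thm} its boundary limit $\lim_{t\to 0}w_t=\phi$ is uniform; moreover $\phi-t\le\gamma_t\le v(\phi,\psi)_t\le\phi$ (Theorem \ref{ray_const}) and $\gamma^*_\tau\le\gamma_0=\phi$, so all potentials occurring below are finite and bounded on bounded $t$-intervals.

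\emph{Step 1: $v(\phi,\psi)\le w$.} First I would check that $w_t\ge\gamma_t$ for every $t\ge 0$. Since $\gamma^*_\tau\le\phi$ and $\gamma^*_\tau\in PSH(X,\omega_0)$ (Kiselman's minimum principle), $\gamma^*_\tau$ itself competes in the envelope defining $P(\gamma^*_\tau+C,\phi)$, so $\tilde\gamma^*_\tau\ge\gamma^*_\tau$; hence $\sup_\tau(\tilde\gamma^*_\tau+t\tau)\ge\sup_\tau(\gamma^*_\tau+t\tau)=\gamma_t$, the last equality expressing that the double Legendre transform of the finite convex function $s\mapsto\max\{\phi(x)-s,\psi(x)\}$ on $[0,+\infty)$ recovers it. As $\gamma_t$ is already upper semicontinuous this persists after regularization, so $w_t\ge\gamma_t$. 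Next I would fix $l>0$ and compare the weak geodesic segment $u^l$ of Theorem \ref{ray_const} — which joins $\phi$ to $\gamma_l=\max\{\phi-l,\psi\}$ — with the restriction of $w$ to $S_{0,l}\times X$, a bounded weak geodesic segment satisfying $w_0=\phi=u^l_0$ and $w_l\ge\gamma_l=u^l_l$, all boundary limits being attained uniformly (Theorem \ref{norm_thm}). The maximum principle \cite[Theorem 6.4]{b1} then gives $u^l_t\le w_t$ on $[0,l]$, and letting $l\to+\infty$ and regularizing yields $v(\phi,\psi)_t=\textup{usc}(\lim_{l}u^l_t)\le w_t$.

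\emph{Step 2: $w\le v(\phi,\psi)$.} Here I would pass to Legendre transforms in $t$. From $\gamma_t\le v(\phi,\psi)_t$ one gets, pointwise in $x$, $v^*_\tau:=\inf_{t>0}(v(\phi,\psi)_t-t\tau)\ge\gamma^*_\tau$ (the value at $t=0$ is the uniform limit $\phi$ on both sides and so is irrelevant to the infimum). Since $v(\phi,\psi)$ is a weak geodesic ray, Proposition \ref{ray_Legendre} gives $v^*_\tau=P_{[v^*_\tau]}(\phi)$; as $\zeta\mapsto P_{[\zeta]}(\phi)$ is monotone (if $\zeta_1\le\zeta_2$ then $P(\zeta_1+C,\phi)\le P(\zeta_2+C,\phi)$), we conclude $v^*_\tau=P_{[v^*_\tau]}(\phi)\ge P_{[\gamma^*_\tau]}(\phi)=\tilde\gamma^*_\tau$. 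Hence $v(\phi,\psi)_t\ge v^*_\tau+t\tau\ge\tilde\gamma^*_\tau+t\tau$ for all $\tau$ and all $t>0$; taking the supremum over $\tau$ and using that $v(\phi,\psi)_t$ is upper semicontinuous forces $v(\phi,\psi)_t\ge\textup{usc}(\sup_\tau(\tilde\gamma^*_\tau+t\tau))=w_t$. Combining the two steps gives $v(\phi,\psi)=w$. No separate discussion of the case $\psi\in\mathcal E(X,\omega_0)$ is needed, since there both sides are forced to be the constant ray $\phi$ — for $v(\phi,\psi)$ by Theorem \ref{ray_const}(iii), and for $w$ as a consequence of the equality just established.

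The argument is soft once Theorems \ref{norm_thm} and \ref{RWN_main} and Proposition \ref{ray_Legendre} are in hand. Three points deserve a line of care: the biconjugation identity $\sup_\tau(\gamma^*_\tau+t\tau)=\gamma_t$, which is routine because $s\mapsto\max\{\phi(x)-s,\psi(x)\}$ is piecewise affine in $s$; the monotonicity of $\zeta\mapsto P_{[\zeta]}(\phi)$, immediate from the definition of the envelope; and the interchange of the $t$-Legendre transform with the $x$-upper-semicontinuous regularization. I expect the third to be the only genuine subtlety, and the scheme above is arranged to sidestep it: I never regularize a supremum that must then be bounded from below, but instead always compare the already-usc functions $v(\phi,\psi)_t$, $w_t$ against \emph{unregularized} suprema, so the inequalities pass through the regularization for free. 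The one place where uniformity of the boundary limits is truly indispensable — the comparison $u^l\le w$ via the maximum principle — is precisely where Theorem \ref{norm_thm} is invoked, and without it the classical maximum principle would not apply.
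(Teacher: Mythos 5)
Your proposal is correct, but its architecture differs from the paper's. The paper proves the identity by showing that the ray $r$ produced by the method of \cite{rwn1} satisfies $r\le u$ for \emph{every} $u\in\mathcal R(\phi,\psi)$ (via $\gamma^*_\tau\le u^*_\tau$, Proposition \ref{ray_Legendre}, monotonicity of $\zeta\mapsto P_{[\zeta]}(\phi)$, and the Legendre involution $\textup{usc}\big(\sup_\tau(u^*_\tau+t\tau)\big)=u_t$), and that $r\in\mathcal R(\phi,\psi)$ (boundary limit $\phi$, $M_r=0$, $m_r\ge -1$, normalization via comparison with $\tilde r$), concluding from the variational characterization $v(\phi,\psi)=\inf_{\mathcal R(\phi,\psi)}$ of Theorem \ref{ray_const}(ii). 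You bypass $\mathcal R(\phi,\psi)$ entirely: your Step 2 is the paper's minimality argument specialized to $u=v(\phi,\psi)$, rearranged through the one-sided inequality $v_t\ge v^*_\tau+t\tau$ so that the involution identity is never needed; and your Step 1 replaces the membership check by a direct maximum-principle comparison of $w$ with the defining segments $u^l$, for which you only need $w_t\ge\gamma_t$ (the explicit biconjugation of the piecewise affine curve $s\mapsto\max\{\phi-s,\psi\}$), the bound $\phi-t\le w_t\le\phi$, and the uniform boundary limits from Theorem \ref{norm_thm} --- essentially the same comparison the paper performs inside the proof of Lemma \ref{ray_const_lemma}. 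The trade-off: the paper reuses Theorem \ref{ray_const}(ii) and so gets the inequality $v(\phi,\psi)\le r$ for free, at the price of verifying that $r$ is normalized and invoking the Legendre involution; your version is more self-contained on the duality side, but note that both arguments lean on Theorem \ref{RWN_main} to know that $w$ is a genuine geodesic (without $(\omega+i\partial\bar\partial w)^{n+1}=0$ your maximum-principle step $u^l\le w$ would not go through), which is exactly where the small-unbounded-locus hypothesis (making $\tau\to\gamma^*_\tau$ a test curve) and the regularity assumption on $\phi$ from Section 2.4 enter.
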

\begin{proof} Since $\tau \to \gamma^*_\tau$ is a test curve one can apply the method of \cite{rwn1} (see Section 2.4) to produce a weak geodesic ray
$$r_t = \textup{usc}\Big(\sup_{\tau \in \Bbb R}(P_{[\gamma^*_\tau]}(\phi) + t\tau)\Big), \ t > 0.$$
We will prove that $r \in \mathcal R(\phi,\psi)$ and $r \leq u$ for any $u \in \mathcal R(\phi,\psi)$. By Theorem \ref{ray_const}(ii) this is enough to conclude that $r = v(\phi,\psi)$. Much of the remaining argument is similar to the proof of Lemma \ref{ray_const_lemma}.

If $u \in \mathcal R(\phi,\psi)$, then $\max \{ \phi-t,\psi\} = \gamma_t \leq u_t, \ t \in [0,+\infty)$, hence also $\gamma^*_\tau \leq u^*_\tau, \ \tau \in \Bbb R$. This implies that
$$P_{[\gamma^*_\tau]}(\phi)\leq P_{[u^*_\tau]}(\phi)=u^*_\tau,$$
where the last identity follows from Proposition \ref{ray_Legendre}. Hence by the involution property of Legendre transforms we arrive at
$$r_t = \textup{usc}\Big(\sup_{\tau \in \Bbb R }(P_{[\gamma^*_\tau]}(\phi) + t \tau)\Big)\leq \textup{usc}\Big(\sup_{\tau \in \Bbb R }(u^*_\tau + t\tau)\Big) = u_t, \ t \in (0,+\infty).$$

Now we prove that $r \in \mathcal R(\phi,\psi)$. Since $\gamma_t \leq r_t \leq \phi, \ t >0$, it follows that $r_0 = \lim_{t \to 0}r_t = \phi$ and $\lim_{t \to +\infty}r_t \geq \psi$. As in the proof of Lemma \ref{ray_const_lemma}, it also follows that $M_r = 0$ and $m_r \geq -1$. We have to argue that $t \to r_t$ is either constant or normalized.

If $t \to r_t$ is non-constant then its normalization $\tilde r \in \mathcal R(\phi,\psi)$ is non-constant as well. Since $r \leq \tilde r$, $r_0=\tilde r_0 = \phi$  and $m_{\tilde r}=-1$ we obtain that $m_{r} \leq -1$. This concludes the proof.
\end{proof}

\vspace{0.1 in}
\textsc{Department of Mathematics, Purdue University, West Lafayette, IN\ 47907}
\emph{E-mail address: }\texttt{\textbf{tdarvas@math.purdue.edu}}

\end{document}